\documentclass[12pt,english,american]{article}
\usepackage[utf8]{inputenc}
\usepackage{xcolor}
\usepackage{babel}
\usepackage{mathrsfs}
\usepackage{amsmath}
\usepackage{amsthm}
\usepackage{amssymb}
\usepackage{geometry}
\usepackage{setspace}
\usepackage[pdfusetitle,
 bookmarks=true,bookmarksnumbered=false,bookmarksopen=false,
 breaklinks=false,pdfborder={0 0 1},backref=false,colorlinks=true]
 {hyperref}
\hypersetup{
 pdfborderstyle=,linkcolor=blue,urlcolor=cyan,citecolor=red}

\makeatletter
\usepackage{paralist}
\numberwithin{figure}{section}
\numberwithin{table}{section}
\numberwithin{equation}{section}

\date{}
\usepackage[misc]{ifsym}
\everymath{\displaystyle}

\usepackage{color}
\def\mystrut(#1,#2){\vrule height #1pt depth #2pt width 0pt}
\usepackage[normalem]{ulem}
\newcommand\culine{\bgroup\markoverwith
{\textcolor{red}{\rule[-0.5ex]{2pt}{0.5pt}}}\ULon}
\def\cdashuline{\bgroup
\UL@setULdepth
\markoverwith{\textcolor{red}{\kern.13em
\vtop{\kern\ULdepth \hrule width .3em}%
\kern.13em}}\ULon}
\def\cuuline{\bgroup \UL@setULdepth
\markoverwith{\textcolor{red}{\lower\ULdepth\hbox
{\kern-.03em\vbox{\hrule width.2em\kern1.2\p@\hrule}\kern-.03em}}}
\ULon}
\usepackage{enumitem}
\setlist[itemize,1]{label={\fontfamily{cmr}\fontencoding{T1}\selectfont\textbullet}}
\usepackage{amssymb}

\makeatletter
\renewenvironment{proof}[1][\proofname] {\par\pushQED{\qed}\normalfont\topsep6\p@\@plus6\p@\relax\trivlist\item[\hskip\labelsep\bfseries#1\@addpunct{.}]\ignorespaces}{\popQED\endtrivlist\@endpefalse}
\makeatother
\usepackage{xpatch}
\makeatletter
\AtBeginDocument{\xpatchcmd{\@thm}{\thm@headpunct{.}}{\thm@headpunct{.}}{}{}}
\makeatother
\usepackage{mdwlist}

\makeatother

\theoremstyle{plain}
\newtheorem{thm}{\protect\theoremname}[section]
\theoremstyle{definition}
\newtheorem{defn}[thm]{\protect\definitionname}
\theoremstyle{remark}
\newtheorem{rem}[thm]{\protect\remarkname}
\theoremstyle{definition}
\newtheorem{example}[thm]{\protect\examplename}
\theoremstyle{plain}
\newtheorem{prop}[thm]{\protect\propositionname}
\newtheorem{lem}[thm]{\protect\lemmaname}
\newtheorem{lyxalgorithm}[thm]{\protect\algorithmname}
\newtheorem{cor}[thm]{\protect\corollaryname}
\addto\captionsamerican{\renewcommand{\algorithmname}{Algorithm}}
\addto\captionsamerican{\renewcommand{\corollaryname}{Corollary}}
\addto\captionsamerican{\renewcommand{\definitionname}{Definition}}
\addto\captionsamerican{\renewcommand{\examplename}{Example}}
\addto\captionsamerican{\renewcommand{\lemmaname}{Lemma}}
\addto\captionsamerican{\renewcommand{\propositionname}{Proposition}}
\addto\captionsamerican{\renewcommand{\remarkname}{Remark}}
\addto\captionsamerican{\renewcommand{\theoremname}{Theorem}}
\addto\captionsenglish{\renewcommand{\algorithmname}{Algorithm}}
\addto\captionsenglish{\renewcommand{\corollaryname}{Corollary}}
\addto\captionsenglish{\renewcommand{\definitionname}{Definition}}
\addto\captionsenglish{\renewcommand{\examplename}{Example}}
\addto\captionsenglish{\renewcommand{\lemmaname}{Lemma}}
\addto\captionsenglish{\renewcommand{\propositionname}{Proposition}}
\addto\captionsenglish{\renewcommand{\remarkname}{Remark}}
\addto\captionsenglish{\renewcommand{\theoremname}{Theorem}}
\providecommand{\algorithmname}{Algorithm}
\providecommand{\corollaryname}{Corollary}
\providecommand{\definitionname}{Definition}
\providecommand{\examplename}{Example}
\providecommand{\lemmaname}{Lemma}
\providecommand{\propositionname}{Proposition}
\providecommand{\remarkname}{Remark}
\providecommand{\theoremname}{Theorem}

\begin{document}
\title{\textbf{A new theorem of alternatives leading to sufficient conditions
for the superiorization guarantee question of Dynamic String-Averaging
in the inconsistent case}}
\author{\selectlanguage{english}%
Kay Barshad$^{*}$ and Yair Censor\\
Department of Mathematics, University of Haifa, Mt. Carmel, \\
Haifa 3498838, Israel \\
\Letter ~ \href{mailto:kaybarshad@gmail.com}{kaybarshad@gmail.com};
\Letter~ \href{mailto:yair@math.haifa.ac.il}{yair@math.haifa.ac.il}}
\date{\selectlanguage{english}%
November 6, 2025. Revised March 31, 2026. Revised July 13, 2026}
\maketitle
\selectlanguage{american}%
\begin{abstract}
We study the Superiorization Methodology (SM) in the context of the
General Dynamic String-Averaging (GDSA) method in the inconsistent
case (that is, where the input operators don't have a common fixed
point) which primarily aims at achieving convex feasibility while
simultaneously reducing an objective function. In many scientific
and real-world problems modeled as constrained minimization tasks,
striving for the exact constrained optimum can be costly in terms
of time, energy, and resources. Therefore, applying the SM can offer
a practical and efficient alternative. In particular, we present a
new “theorem of alternatives” for the superiorization method which
leads to investigation of theoretical conditions under which the superiorized
version of the GDSA algorithm converges to a “superior” feasible point,
i.e., one with an objective function value that is smaller or equal
to that produced by the unperturbed feasibility-seeking algorithm.
While this question has only been partially addressed in the existing
literature, we present new sufficient conditions that guarantee that
the SM attains such a superior outcome.\bigskip{}

($*)$ Corresponding author: \foreignlanguage{english}{Kay Barshad,
\href{mailto:kaybarshad@gmail.com}{kaybarshad@gmail.com}.}
\end{abstract}

\section{Introduction}\label{sec:Introduction}

The Superiorization Methodology (SM) embeds objective function reduction
steps (“perturbations”) into a feasibility-seeking algorithm (“the
basic algorithm”), producing a ``superiorized version of the basic
algorithm''. These perturbations aim to reduce the objective function
values locally before the next feasibility-seeking iterative step.
While the SM has been widely successful in practice across many large-scale
applications, a full mathematical guarantee that the superiorized
algorithm both converges to a feasible point and achieves a superior
objective function value (that is, a smaller or equal value) compared
to the unperturbed algorithm, in which all other settings are the
same as in the perturbed one, remains unresolved.

Practical applications provide strong empirical evidence that the
SM often achieves global objective function reduction. Many studies
cited in \cite{CensorBibl} demonstrate this in real-world applications.
Partial theoretical results exist, such as strong \foreignlanguage{english}{Fej{\'e}r}
monotonicity (see, for example, \cite[Theorem 5.4]{GDSA_inconsist}
and \cite[Theorem 4.1]{CZ_sup}) and concentration of measure arguments
(\cite{CensorLevy2021}), but counterexamples (see, for instance,
\cite{Artacho_restart}, \cite{neg_condit_2025} and \cite{Torregrosa-2024})
also show that the SM can fail to deliver the guarantee. The recent
study described in \cite{neg_condit_2025} yields a necessary condition
for the SM to fail. The central open question, therefore, is to identify
precise conditions under which the SM is guaranteed to succeed.

The General Dynamic String-Averaging (GDSA) method has been recently
introduced in \cite{GDSA_inconsist}, where its convergence properties
were studied and it was shown there that the sequence of output operators
of this method is \foreignlanguage{english}{Fej{\'e}r }monotone with
respect to a certain feasible set. Although the results in \cite{GDSA_inconsist}
apply to the inconsistent case, where the input operators have no
common fixed points, they are also valid in the consistent case, where
the input operators do have a common fixed point.

In this paper we continue to focus on the inconsistent case, which
(like in \cite{GDSA_inconsist}) requires assumptions on the output
operators. Namely, we present a new “theorem of alternatives” for
the SM. It states that either the superiorized version of the GDSA
algorithm outperforms the feasibility-seeking GDSA algorithm, or,
starting from the same initialization point, the sequence of distances
between their iterates is strictly decreasing. As a consequence of
this theorem, we derive sufficient conditions guaranteeing that the
SM algorithm, based on the GDSA scheme with negative (sub)gradient
descent perturbations, provides a superior outcome compared to the
corresponding unperturbed scheme under the same settings. These sufficient
conditions provide insights into effective strategies for choosing
the perturbation parameters, which we also demonstrate in our work.

It should be emphasized at the outset that the sufficient conditions
presented in Corollaries \ref{subseq_suff_cond} and \ref{inner_prod}
below are primarily theoretical guarantees. In general, they cannot
be verified during the execution of the algorithm. Nevertheless, their
structural meaning suggests heuristic practical indicators (discussed
at the end of Section \ref{sec3}) that may be monitored or encouraged
in implementations. These indicators are heuristic in nature and should
not be interpreted as verifiable criteria equivalent to the sufficient
conditions.

Our results are somewhat restricted in the consistent case, since
they leave room for improvement by imposing admissibility assumptions
on the input operators rather than on the output ones. We leave a
detailed study of the consistent case for future work in a different
framework.

From a computational perspective, SM is particularly valuable in large,
sparse problems where traditional optimization methods become costly.
For instance, the Linear Superiorization (LinSup) approach (see \cite{Cen_Lin_sup})
has been shown to outperform the Simplex algorithm on large-scale
linear programs. The continuously updated bibliography in \cite{CensorBibl}
further attests to SM’s success across practical, large-scale applications.

The original motivation of the SM remains practical: the desire to
reach a feasible point in the intersection of many constraints sets
while achieving low-cost improvements of an objective function---not
necessarily full optimization, but “satisficing”\footnote{Support for the reasoning of the SM may be borrowed from the American
scientist and Nobel-laureate Herbert Simon (see \cite{Simon1956RationalChoice})
who was in favor of ‘satisficing’ rather than ‘maximizing’. Satisficing
is a decision-making strategy that aims for a satisfactory or adequate
result, rather than the optimal solution. This is because aiming for
the optimal solution may necessitate needless expenditure of time,
energy and resources. The term ‘satisfice’ was coined by Simon in
1956, see also: https://en.wikipedia.org/wiki/Satisficing.} reductions. Recent developments, such as randomized perturbations
and step-size restarts, further enhance the SM’s effectiveness without
undermining its resilience to perturbations, such randomization and
step-size restart strategies, are detailed in \cite{Artacho_restart},
\cite{Berin_Kum_Pakk_Sule_2020}, \cite{CensorLevy2021} and \cite{erturk-salim-2023}.
For more information concerning the SM see, for example,\cite{neg_condit_2025},
\cite{CensorBibl}, \cite{asymmetric-2023}, \cite{CZ_sup}, \cite{humphries-2022}
and \cite{Torregrosa-2024}.

Modern studies showcase the practical impact of the SM in various
situations where data of constrained optimization problems has to
be considered. Guenter et al. in \cite{CollinsEtAl2022} compare the
SM with regularization on large, sparse linear systems in image reconstruction.
Fink et al. in \cite{Cavalc_Fink_Stan_2021} use bounded perturbation
resilient mappings with SM-inspired perturbations for nonconvex multicast
beamforming. Pakkaranang et al. in \cite{Berin_Kum_Pakk_Sule_2020}
integrate the SM into a modified proximal gradient algorithm for non-smooth
composite optimization, which they apply to image recovery problems,
achieving strong convergence results. Ma et al. in \cite{AmaranEtAl2021}
extend derivative-free optimization algorithms to large-scale problems
within an SM-related framework. Numerous other applications are summarized
in \cite{CensorBibl}, which include, inter alia, \cite{humphries-2022},
\cite{Janakiev2016}, \cite{Langthaler2014}, \cite{LuoZhou2014}
and \cite{Oliveira2023}, to mention but a few.

The rest of the paper is organized as follows. Section \ref{sec2}
provides background on the algorithms and operators under consideration,
which is required for establishing our results. In Section \ref{sec3}
our new theorem of alternatives is formulated and proved which enables
comparing the behavior of the superiorized algorithm with the unperturbed
feasibility-seeking one. This is followed by our sufficient conditions
for the guarantee question of\textbf{ }the superiorization methodology
for Dynamic String-Averaging in the inconsistent case, discussed above.

\section{Preliminaries}\label{sec2}

Throughout this paper $\mathbb{N}$ denotes the set of natural numbers
(starting from $0$), and for any two integers $m$ and $n$, with
$m\le n$, we denote by $\left\{ m,m+1,\dots,n\right\} $ the set
of all integers between $m$ and $n$. The real line is denoted by
$\mathbb{R}$. For a set $A$, we denote by $\left|A\right|$ the
cardinality of $A$. For a real Hilbert space $\mathcal{H}$, we use
the following notations:
\begin{itemize}
\item $\langle\cdot,\cdot\rangle$ denotes the inner product on $\mathcal{H}.$
\item $\Vert\cdot\Vert$ denotes the norm on $\mathcal{H}$ induced by $\langle\cdot,\cdot\rangle.$
\item $Id$ denotes the identity operator on $\mathcal{H}$.
\item $\mathrm{Fix}T$ denotes the, possibly empty, set $\mathrm{Fix}T:=\{x\in\mathcal{H}\mid T(x)=x\}$
of fixed points of an operator $T:\mathcal{H}\rightarrow\mathcal{H}$.
\item For a nonempty and convex subset $C$ of $\mathcal{H}$, we denote
by $P_{C}$ the (unique) metric projection onto $C$, the existence
of which is guaranteed if $C$ is, in addition, closed.
\item For a convex function $\phi:\mathcal{H}\rightarrow\mathbb{R}$ and
a point $x\in\mathcal{H}$, we denote by $\partial\phi\left(x\right)$
the subdifferential of $\phi$ at $x$, that is,
\begin{equation}
\partial\phi\left(x\right):=\left\{ g\in\mathcal{H}\,|\,\left\langle g,y-x\right\rangle \le f\left(y\right)-f\left(x\right)\,\,\mathrm{for}\,\,\mathrm{all}\,\,y\in\mathcal{H}\right\} .\label{eq:-24}
\end{equation}
\item $B\left(x,r\right)$ denotes the open ball centered at $x\in\mathcal{H}$
of radius $r>0$.
\item For a function $f:\mathcal{H}\rightarrow\mathbb{R}$ and a subset
$A$ of $\mathcal{H}$, we denote by $\underset{x\in A}{\mathrm{Argmin}}f\left(x\right)$
and $\underset{x\in A}{\mathrm{Argmax}}f\left(x\right)$, respectively,
the set of minimizers of $f$ and the set of maximizers of $f$ on
the set $A$.
\end{itemize}
We recall the following types of algorithmic operators. For more information
on such operators, see, for example, \cite{C_book}.
\begin{defn}
Let $T:\mathcal{H}\rightarrow\mathscr{\mathcal{H}}$ be an operator
and let $\lambda\in\left[0,2\right]$. The operator $T_{\lambda}:\mathcal{H}\rightarrow\mathscr{\mathcal{H}}$
defined by $T_{\lambda}:=\left(1-\lambda\right)Id+\lambda T$ is called
a $\lambda$-$relaxation$ of the operator $T$. The operator $T_{2}$
is called a \textit{reflection} of the operator $T$.
\end{defn}

The operator $T_{\lambda}$ in this definition is an averaged mapping
in the sense of Section 2 in \cite{BaillonBruckReich1977}.
\begin{defn}
An operator $T:\mathcal{H}\rightarrow\mathcal{H}$ is said to be \textit{nonexpansive}
(NE) if 
\[
\left(\forall x,y\in\mathcal{H}\right)\,\,\left\Vert T\left(x\right)-T\left(y\right)\right\Vert \leq\left\Vert x-y\right\Vert .
\]
For $\lambda\in\left[0,2\right]$, an operator $T:\mathcal{H}\rightarrow\mathcal{H}$
is said to be $\lambda$-\textit{relaxed nonexpansive} if $T$ is
a $\lambda$-relaxation of a nonexpansive operator $U$, that is,
$T=U_{\lambda}$.
\end{defn}

\begin{rem}
\label{conv_com_comp}Clearly, convex combinations and compositions
of nonexpansive operators are also nonexpansive. Moreover, for each
$\lambda\in\left[0,1\right]$, the $\lambda$-relaxation of a nonexpansive
operator is also nonexpansive.
\end{rem}

\begin{defn}
We say that an operator $T:\mathscr{\mathcal{H}}\rightarrow\mathcal{H}$
is \textit{firmly nonexpansive} if
\[
\left(\forall x,y\in\mathcal{H}\right)\left\langle T\left(x\right)-T\left(y\right),x-y\right\rangle \ge\left\Vert T\left(x\right)-T\left(y\right)\right\Vert ^{2}.
\]
\end{defn}

\selectlanguage{english}%
\begin{thm}
[{\cite[Theorem 2.2.10]{C_book}}]\foreignlanguage{american}{\label{FNE-equiv.cond}Let
$T:\mathcal{H}\rightarrow\mathcal{H}$ be an operator. The following
conditions are equivalent:}
\selectlanguage{american}%
\begin{enumerate}
\item $T$ is firmly nonexpansive.
\item $T_{\lambda}$ is nonexpansive for each $\lambda\in\left[0,2\right]$.
\item There exists a nonexpansive operator $N:\mathcal{H\rightarrow\mathcal{H}}$
such that $T=2^{-1}\left(Id+N\right)$.
\end{enumerate}
\end{thm}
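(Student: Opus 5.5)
I will prove the cycle of implications (i)$\Rightarrow$(iii)$\Rightarrow$(ii)$\Rightarrow$(i). The single algebraic identity doing all the work is the following: for any operator $T$ and any $x,y\in\mathcal{H}$, write $u:=x-y$ and $v:=T(x)-T(y)$, and set $N:=2T-Id=T_{2}$. Then $N(x)-N(y)=2v-u$, and expanding the square gives
\begin{equation*}
\left\Vert u\right\Vert ^{2}-\left\Vert 2v-u\right\Vert ^{2}=4\left(\left\langle v,u\right\rangle -\left\Vert v\right\Vert ^{2}\right).
\end{equation*}
This shows that $T$ is firmly nonexpansive if and only if its reflection $T_{2}$ is nonexpansive.

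\emph{(i)$\Rightarrow$(iii).} If $T$ is firmly nonexpansive, the identity gives $\left\Vert N(x)-N(y)\right\Vert \le\left\Vert x-y\right\Vert$, so $N=2T-Id$ is nonexpansive. Trivially $T=2^{-1}\left(Id+N\right)$.

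\emph{(iii)$\Rightarrow$(ii).} Suppose $T=2^{-1}\left(Id+N\right)$ with $N$ nonexpansive. For $\lambda\in\left[0,2\right]$ we compute
\begin{equation*}
T_{\lambda}=\left(1-\lambda\right)Id+\lambda T=\left(1-\tfrac{\lambda}{2}\right)Id+\tfrac{\lambda}{2}N .
\end{equation*}
Since $\lambda/2\in\left[0,1\right]$, this is a convex combination of the nonexpansive operators $Id$ and $N$. By Remark \ref{conv_com_comp}, $T_{\lambda}$ is therefore nonexpansive.

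\emph{(ii)$\Rightarrow$(i).} Take $\lambda=2$ in (ii), so that $T_{2}=2T-Id=N$ is nonexpansive. Then $\left\Vert u\right\Vert ^{2}-\left\Vert 2v-u\right\Vert ^{2}\ge0$, and the identity yields $\left\langle v,u\right\rangle \ge\left\Vert v\right\Vert ^{2}$. This is exactly firm nonexpansiveness.

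There is no real obstacle here. The only point needing care is verifying the expansion $\left\Vert 2v-u\right\Vert ^{2}=4\left\Vert v\right\Vert ^{2}-4\left\langle v,u\right\rangle +\left\Vert u\right\Vert ^{2}$, which is routine. Once this identity is in hand, each implication takes a single line.
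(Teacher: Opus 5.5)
Your proof is correct and complete. The identity $\Vert u\Vert^{2}-\Vert 2v-u\Vert^{2}=4\left(\langle v,u\rangle-\Vert v\Vert^{2}\right)$ is right, as is the rewriting $T_{\lambda}=(1-\tfrac{\lambda}{2})Id+\tfrac{\lambda}{2}N$, and Remark \ref{conv_com_comp} (which does not depend on this theorem) legitimately closes (iii)$\Rightarrow$(ii). The paper gives no proof of its own: it quotes the result from Cegielski's book and attributes it essentially to Goebel and Reich, and your argument through the reflection $T_{2}=2T-Id$ is the standard one.
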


\selectlanguage{american}%
Theorem \ref{FNE-equiv.cond} is essentially Proposition 11.2 on page
42 of the book by Goebel and Reich, \cite{GoebelReich1984}.
\begin{example}
\label{metric projection}Given a nonempty, closed an convex subset
$C$ of $\mathcal{H}$, the metric projection $P_{C}$ onto $C$ is
firmly nonexpansive (see, e.g., Theorem 2.2.21 in \cite{C_book}).
Moreover $\mathrm{Fix}P_{C}=C$.
\end{example}

\begin{defn}
For $\lambda\in\left[0,2\right]$, an operator $T:\mathcal{H}\rightarrow\mathcal{H}$
is called $\lambda$-\textit{relaxed firmly nonexpansive} if $T$
is a $\lambda$-relaxation of a firmly nonexpansive operator $U$,
that is, $T=U_{\lambda}=\left(1-\lambda\right)Id+\lambda U$.
\end{defn}

\begin{defn}
We say that an operator $T:\mathcal{H}\rightarrow\mathcal{H}$ is
\textit{quasi-nonexpansive} if 
\[
\left\Vert T\left(x\right)-z\right\Vert \le\left\Vert x-z\right\Vert 
\]
for each $x\in\mathcal{H}$ and $z\in\mathrm{Fix}T$.
\end{defn}

The notions of bounded regularity and approximate shrinking are needed
for establishing the strong convergence of the methods which we discuss
in this paper. The bounded regularity of a finite family of sets was
studied in \cite[Section 5]{BB96} and \cite{BB93}. This property
was extended to an infinite family of sets in \cite{GDSA_inconsist}.
We recall it briefly below.
\begin{defn}
\label{bound_reg}For a nonempty index set $I$, the family $\left\{ C_{i}\right\} _{i\in I}$
of nonempty, closed and convex subsets of $\mathcal{H}$ with nonempty
intersection $C$ is \textit{boundedly regular }if for any bounded
sequence $\left\{ x^{k}\right\} ^{\infty}_{k=0}$ in $\mathcal{H}$,
the following implication holds:

\textit{
\[
\lim_{k\rightarrow\infty}d\left(x^{k},C_{i}\right)=0\,\,\mathrm{for}\,\,\mathrm{each\,\,}i\in I\,\,\Longrightarrow\,\,\lim_{k\rightarrow\infty}d\left(x^{k},C\right)=0.
\]
}
\end{defn}

The next proposition provides sufficient conditions for bounded regularity.
We recall that a topological space $X$ is locally compact if each
$x\in X$ has a compact neighborhood with respect to the topology
inherited from $X$.
\selectlanguage{english}%
\begin{prop}
[{\cite[Proposition 3.2]{GDSA_inconsist}}]\foreignlanguage{american}{\label{bounded_reg}Let
$\left\{ C_{i}\right\} _{i\in I}$ be a family of nonempty, closed
and convex subsets of $\mathcal{H}$ with a nonempty intersection
$C$. Then the following assertions hold:}
\selectlanguage{american}%
\begin{enumerate}
\item If there is $i_{0}\in I$ for which the set $C_{i_{0}}$ is a locally
compact topological space (with respect to the norm topology inherited
from $\mathcal{H}$), then the family $\left\{ C_{i}\right\} _{i\in I}$
is boundedly regular.
\item If $\mathcal{H}$ is of finite dimension, then the family $\left\{ C_{i}\right\} _{i\in I}$
is boundedly regular.
\end{enumerate}
\end{prop}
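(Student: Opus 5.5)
For assertion 1, the plan is a contradiction argument built on compactness near $C$ inside $C_{i_{0}}$. Let $\left\{ x^{k}\right\} _{k=0}^{\infty}$ be bounded with $d\left(x^{k},C_{i}\right)\rightarrow0$ for every $i\in I$, and suppose $d\left(x^{k},C\right)\not\rightarrow0$. Passing to a subsequence, we may assume $d\left(x^{k},C\right)\ge\varepsilon>0$ for all $k$. Put $y^{k}:=P_{C_{i_{0}}}\left(x^{k}\right)$. Then $\left\Vert x^{k}-y^{k}\right\Vert \rightarrow0$, so $\left\{ y^{k}\right\}$ is a bounded sequence in $C_{i_{0}}$. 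It satisfies $d\left(y^{k},C_{i}\right)\le d\left(x^{k},C_{i}\right)+\left\Vert x^{k}-y^{k}\right\Vert \rightarrow0$ for each $i$, and $d\left(y^{k},C\right)\ge\varepsilon/2$ for large $k$. The aim is to extract a convergent subsequence of $\left\{ y^{k}\right\}$. Its limit $y$ lies in $C_{i_{0}}$, since $C_{i_{0}}$ is closed. For each $i$, continuity of $d\left(\cdot,C_{i}\right)$ gives $d\left(y,C_{i}\right)=0$, so $y\in C_{i}$ because $C_{i}$ is closed. Hence $y\in C$, contradicting $d\left(y^{k},C\right)\ge\varepsilon/2$.

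The main obstacle is obtaining that convergent subsequence. Local compactness only provides compact neighbourhoods, not compactness of bounded sets, and we cannot assume a priori that the $y^{k}$ lie near a fixed point. I will use convexity to show that a convex, closed, locally compact subset of a normed space has relatively compact bounded subsets.

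Fix $c\in C\subseteq C_{i_{0}}$ and a radius $r>0$ such that $K:=C_{i_{0}}\cap\overline{B}\left(c,r\right)$ is compact; local compactness at $c$ gives such an $r$. Let $R\ge r$ bound $\left\Vert y^{k}-c\right\Vert$. Consider the homothety $h\left(z\right)=c+\left(r/R\right)\left(z-c\right)$. By convexity it maps $C_{i_{0}}\cap\overline{B}\left(c,R\right)$ into $K$, and it is a homeomorphism of $\mathcal{H}$. Therefore $C_{i_{0}}\cap\overline{B}\left(c,R\right)=h^{-1}\left(h\left(C_{i_{0}}\cap\overline{B}\left(c,R\right)\right)\right)$ is a closed subset of the compact set $h^{-1}\left(K\right)$, and so it is compact. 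The sequence $\left\{ y^{k}\right\}$ lies in this compact set, which gives the convergent subsequence.

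For assertion 2, pick any $i_{0}\in I$. In finite dimension the closed set $C_{i_{0}}$ is locally compact by Heine–Borel, since its intersection with a closed ball is closed and bounded. Assertion 1 then applies.
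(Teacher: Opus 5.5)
Your proof is correct and complete. The paper does not prove this proposition itself; it quotes it from \cite{GDSA_inconsist}, so there is no in-text argument to compare with. Your route is the natural one: project onto $C_{i_0}$, then use the homothety $h(z)=c+(r/R)(z-c)$ together with convexity to turn local compactness at the single point $c$ into compactness of every bounded closed piece $C_{i_0}\cap\overline{B}(c,R)$, and finally extract a strongly convergent subsequence whose limit lies in every $C_i$. Convexity enters exactly where it is indispensable.

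The one step you state without justification is the choice of $r$, and it is routine. A compact neighbourhood $N$ of $c$ in $C_{i_0}$ contains $C_{i_0}\cap B(c,r')$ for some $r'>0$. For $0<r<r'$ the set $C_{i_0}\cap\overline{B}(c,r)$ is closed in $\mathcal{H}$ and contained in $N$, hence compact.
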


\selectlanguage{american}%
We also recall the following notion of approximate shrinking which
was extensively studied in \cite{CiegZ_app_s}. This notion is also
known in the literature as regularity (see, for instance, \cite[Definition 7.1]{BNP15}
or \cite[Definition 3.1]{CRZ18}). More information on boundedly regular
operators and families of mappings, and their applications, can be
found, for example, in Section 3 of \cite{ReichSalinas2017}.
\begin{defn}
A quasi-nonexpansive operator $T:\mathcal{H}\rightarrow\mathcal{H}$
\textit{is approximately shrinking} if for each bounded sequence $\left\{ x^{k}\right\} ^{\infty}_{k=0}$
in $\mathcal{H}$, the following implication holds:
\[
\lim_{k\rightarrow\infty}\left\Vert T\left(x^{k}\right)-x^{k}\right\Vert =0\,\,\Longrightarrow\,\,\lim_{k\rightarrow\infty}d\left(x^{k},\mathrm{Fix}T\right)=0.
\]
\end{defn}

\begin{example}
\label{ex_as}Given a nonempty, closed an convex subset $C$ of $\mathcal{H}$,
the metric projection $P_{C}$ onto $C$ is approximately shrinking
(see Example 3.5 in \cite{CiegZ_app_s}).
\end{example}

\selectlanguage{english}%
\begin{thm}
[{\cite[Theorem 16.7(ii)]{BC_book}}]\foreignlanguage{american}{\label{subdiff_ne+b}Let
$\phi:\mathcal{H}\rightarrow\mathbb{R}$ be convex and continuous
function at the point $x\in\mathcal{H}$. Then the subgradient set
$\partial\phi\left(x\right)$ is nonempty.}
\end{thm}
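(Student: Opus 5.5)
The statement to prove is Theorem \ref{subdiff_ne+b}: if $\phi$ is convex and continuous at $x$, then $\partial\phi(x)\neq\emptyset$. The plan is a geometric Hahn--Banach separation argument applied to the epigraph of $\phi$. Set $\mathrm{epi}\,\phi:=\{(y,t)\in\mathcal{H}\times\mathbb{R}\mid\phi(y)\le t\}$, which is convex because $\phi$ is convex. We work in the Hilbert space $\mathcal{H}\times\mathbb{R}$ with the product inner product.

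First, we show that $\mathrm{epi}\,\phi$ has nonempty interior. By continuity of $\phi$ at $x$, there is $r>0$ such that $\phi(y)<\phi(x)+1$ for all $y\in B(x,r)$. Hence the open set $B(x,r)\times(\phi(x)+1,\infty)$ is contained in $\mathrm{epi}\,\phi$. In particular, $(x,\phi(x)+2)$ is an interior point.

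Next, the point $(x,\phi(x))$ is not an interior point of $\mathrm{epi}\,\phi$, since $(x,\phi(x)-\varepsilon)\notin\mathrm{epi}\,\phi$ for every $\varepsilon>0$. We apply the separation theorem to the nonempty open convex set $\mathrm{int}\,\mathrm{epi}\,\phi$ and the point $(x,\phi(x))$, which lies outside it. This gives $(u,\alpha)\neq(0,0)$ such that
\[
\langle u,y\rangle+\alpha t\le\langle u,x\rangle+\alpha\phi(x)
\]
for all $(y,t)\in\mathrm{int}\,\mathrm{epi}\,\phi$. The inequality extends to all of $\mathrm{epi}\,\phi$ because the closure of the interior of a convex set with nonempty interior contains the whole set.

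The main obstacle is ruling out a vertical separating hyperplane, that is, showing $\alpha<0$. Letting $t\to\infty$ with $y=x$ forces $\alpha\le0$. Now suppose $\alpha=0$. Then $\langle u,y-x\rangle\le0$ for all $y\in B(x,r)$, because each such $y$ is paired with some large $t$ in the epigraph. Taking $y=x+su$ for small $s>0$ gives $\|u\|^{2}\le0$, so $u=0$, which contradicts $(u,\alpha)\neq(0,0)$. Hence $\alpha<0$.

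Finally, we set $g:=-u/\alpha=u/|\alpha|$ and take $t=\phi(y)$. Dividing the inequality by $|\alpha|$ gives
\[
\langle g,y-x\rangle\le\phi(y)-\phi(x)
\]
for all $y\in\mathcal{H}$. This says $g\in\partial\phi(x)$, so $\partial\phi(x)\neq\emptyset$.
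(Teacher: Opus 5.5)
Your proof is correct. It is the standard supporting-hyperplane argument at the boundary point $(x,\phi(x))$ of $\mathrm{epi}\,\phi$, and each step holds:
\begin{itemize}
\item continuity at $x$ gives the open box $B(x,r)\times(\phi(x)+1,\infty)$ inside the epigraph, so $\mathrm{int}\,\mathrm{epi}\,\phi$ is nonempty;
\item the separating inequality passes from $\mathrm{int}\,\mathrm{epi}\,\phi$ to all of $\mathrm{epi}\,\phi$ by the line-segment principle and continuity of the functional;
\item letting $t$ grow gives $\alpha\le 0$;
\item the ball $B(x,r)$ rules out a vertical hyperplane, so $\alpha<0$.
\end{itemize}

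The paper gives no proof of this statement; it only quotes the result from Bauschke and Combettes. Your argument therefore supplies a self-contained justification rather than an alternative to one in the paper.

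Because $\phi$ is finite on all of $\mathcal{H}$ in the paper's setting, you could dismiss $\alpha=0$ faster. Plug $(x+u,\phi(x+u))\in\mathrm{epi}\,\phi$ into the inequality to get $\|u\|^{2}\le 0$ at once. Your ball-based version does more, though. It also covers extended-valued convex $\phi$ with $x\in\mathrm{int}\,\mathrm{dom}\,\phi$, which is the generality of the cited textbook result.
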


\selectlanguage{american}%
\selectlanguage{english}%
\begin{defn}
[{\cite[Definition 2.24]{GDSA_inconsist}}]\foreignlanguage{american}{Let
$\Gamma\subseteq\mathcal{H}$ be a given nonempty subset of $\mathcal{H}$
and $\left\{ T_{k}\right\} ^{\infty}_{k=0}$ be a sequence of operators,
$T_{k}:\mathcal{H}\rightarrow\mathcal{H}$ for each $k\in\mathbb{N}$.
The algorithm $x^{k+1}:=T_{k}(x^{k}),$ for all $k\in\mathbb{N},$
is said to be\textit{ bounded perturbations resilient} with respect
to\textbf{ $\Gamma$}\emph{ }if the following is true: If a sequence
$\{x^{k}\}^{\infty}_{k=0},$ generated by the algorithm, converges
in the norm of $\mathcal{H}$ to a point in $\Gamma$ for all $x^{0}\in\mathcal{H}$,
then any sequence $\{y^{k}\}^{\infty}_{k=0}$ in $\mathcal{H}$ that
is generated by the algorithm $y^{k+1}:=T_{k}(y^{k}+\beta_{k}v^{k}),$
for all $k\in\mathbb{N},$ also converges in the norm of $\mathcal{H}$
to a point in $\Gamma$ for all $y^{0}\in\mathcal{H},$ provided that
$\{\beta_{k}v^{k}\}^{\infty}_{k=0}$ are bounded perturbations, meaning
that $\left\{ \beta_{k}\right\} ^{\infty}_{k=0}$ is a sequence of
positive real numbers such that $\sum^{\infty}_{k=0}\beta_{k}<\infty$
and that the vector sequence $\{v^{k}\}^{\infty}_{k=0}$ is a bounded
sequence in $\mathcal{H}$.}
\end{defn}

\selectlanguage{american}%
The notion of perturbation resilience relies on ideas presented in \cite{BuRZ1}, \cite{ButnariuReichZaslavski2007}
and \cite{Combettes2001}, wherein it was shown that if iterates of
a nonexpansive operator converge for any initial point, then its inexact
iterates with summable errors also converge. For a more detailed history
consult, e.g., \cite{BarshadCensor2026}.

Throughout the rest of the paper we refer, among other things, to
the following setting defined below. We recall that for an arbitrary
sequence of sets $\left\{ A_{k}\right\} ^{\infty}_{k=0}$, $\limsup_{k\rightarrow\infty}A_{k}:=\cap^{\infty}_{n=0}\cup^{\infty}_{k=n}A_{k}$.

Let $m$ be a positive integer. We consider a finite family $\left\{ U_{i}\right\} ^{m}_{i=1}$
of $\alpha_{i}$-relaxed firmly nonexpansive operators, where $U_{i}:\mathcal{H\rightarrow\mathcal{H}}$
and $\alpha_{i}\in\left(0,2\right]$ for each $i=1,2,\dots,m$. Let
$\left\{ q_{k}\right\} ^{\infty}_{k=0}$ be a bounded sequence of
positive integers. By Theorem \ref{FNE-equiv.cond}, the operator
$U_{i}$ is nonexpansive for each $i=1,2,\dots,m$. Let $\mathcal{M}:=\max_{k\in\mathbb{N}}q_{k}$
and define
\[
\rho_{\left\{ U_{i}\right\} ^{m}_{i=1}}:=\min\left\{ \mathcal{M}^{-1}\min_{i\in\left\{ 1,2,\dots,m\right\} }\left(2-\alpha_{i}\right)\alpha^{-1}_{i},1\right\} \le1,
\]
where the inequality follows immediately from the definition, since
$\rho_{\left\{ U_{i}\right\} ^{m}_{i=1}}$ is defined as the minimum
of two numbers, one of which equals 1. Let $\left\{ \varOmega_{k}\right\} ^{\infty}_{k=0}$
be a family of nonempty sets such that $\varOmega_{k}\subset\left\{ 1,2,\dots,m\right\} {}^{\left\{ 1,2,\dots,q_{k}\right\} }.$
That is, $\varOmega_{k}$ is a finite subset of the set of functions
from $\left\{ 1,2,\dots,q_{k}\right\} $ to $\left\{ 1,2,\dots,m\right\} $
for each $k\in\mathbb{N}$. Since the sequence $\left\{ q_{k}\right\} ^{\infty}_{k=0}$
is bounded, the number of different elements in the family $\left\{ \varOmega_{k}\right\} ^{\infty}_{k=0}$
is finite. For each $k\in\mathbb{N}$ and each $t\in\varOmega_{k}$,
set $V_{k}\left[t\right]:=U_{t\left(q_{k}\right)}\cdots U_{t\left(2\right)}U_{t\left(1\right)}$
and let $\omega_{k}:\varOmega_{k}\rightarrow\left(0,1\right]$ be
a function such that $\sum_{t\in\varOmega_{k}}\omega_{k}\left(t\right)=1$.

For each $k\in\mathbb{N}$, define $T_{\left(\varOmega_{k},\omega_{k}\right)}:=\sum_{t\in\varOmega_{k}}\omega_{k}\left(t\right)V_{k}\left[t\right]$.
Define the set $I:=\limsup_{k\rightarrow\infty}\left\{ T_{\left(\varOmega_{k},\omega_{k}\right)}\right\} $,
that is, $I=\limsup_{k\rightarrow\infty}A_{k}$, where the sequence
of sets $\left\{ A_{k}\right\} ^{\infty}_{k=0}$ is defined by singletons,
$A_{k}:=\left\{ T_{\left(\varOmega_{k},\omega_{k}\right)}\right\} $.
Define a family of sets $\left\{ C_{T}\right\} _{T\in I}$ by $C_{T}:=\mathrm{Fix}T$
for each $T\in I$. Set $F:=\cap^{\infty}_{k=0}\mathrm{Fix}T_{\left(\varOmega_{k},\omega_{k}\right)}$
and $C:=\cap_{T\in I}C_{T}$ (in case $I$ is empty $\cap_{T\in I}C_{T}=\mathcal{H}$
by definition). At this stage, no assumption is made on the non-emptiness
of the sets $I,$ $F$ and $C$. Such assumptions are made in the
sequel.
\begin{rem}
The operators $T_{\left(\varOmega_{k},\omega_{k}\right)}$, defined
above, are ``string-averaging operators'' as first introduced in
\cite{CEH2001} and further studied in various forms and settings,
see, for instance, Example 5.21 in \cite{BC_book}, \cite{KPH} and
\cite{Nikazad2016}, to name but a few. In those and other papers,
the index vector $t$ is called ``a string'', the composite operator
$V_{k}\left[t\right]$ is called ``a string operator'' and $\omega_{k}$
are called ``weight functions''.

We introduce the following definition of \textit{$\limsup$}-admissibility
of sequences of operators. The almost cyclic control \cite[Definition 5.6.10]{C_book}
is a particular case of the admissible control \cite[Definition 3.2]{PLC}
used in this definition, while this admissible control itself is a
refinement of the repetitive control \cite[Definition 5.6.11]{C_book}.
In particular, limsup-admissibility guarantees repetitive activation
of all asymptotically relevant operators, without imposing a uniform
window condition.
\end{rem}

\begin{defn}
[{\cite[Definition 4.2]{GDSA_inconsist}}]\label{def:admis}We say
that a sequence $\left\{ T_{k}\right\} ^{\infty}_{k=0}$of operators,
$T_{k}:\mathcal{H}\rightarrow\mathcal{H}$ for each $k\in\mathbb{N}$,
is \textit{$\limsup$-admissible}, if $\left\{ T_{k}\right\} ^{\infty}_{k=0}\subset\limsup_{k\rightarrow\infty}\left\{ T_{k}\right\} $
and for each $T\in\limsup_{k\rightarrow\infty}\left\{ T_{k}\right\} $,
there is an integer $M_{T}>0$ such that $T\in\cup^{k+M_{T}-1}_{n=k}\left\{ T_{\left(\varOmega_{n},w_{n}\right)}\right\} $
for all $k\in\mathbb{N}$.
\end{defn}

\begin{rem}
\label{rem:admiss}Clearly, for each $k_{0}\in\mathbb{N}$, $\limsup_{k\rightarrow\infty}\left\{ T_{k}\right\} =\limsup_{k\rightarrow\infty}\left\{ T_{k_{0}+k}\right\} $
and if a sequence $\left\{ T_{k}\right\} ^{\infty}_{k=0}$ of operators
is $\limsup$-admissible, then the sequence $\left\{ T_{k_{0}+k}\right\} ^{\infty}_{k=0}$
is $\limsup$-admissible.
\end{rem}

\begin{rem}
Observe that if, in the above setting, we require the sequence $\left\{ \omega_{k}\right\} ^{\infty}_{k=0}$
to attain a finite number of values, then the sequence $\left\{ T_{\left(\varOmega_{k},\omega_{k}\right)}\right\} ^{\infty}_{k=0}$
will also contain a finite number of operators. Thus, in order to
ensure the existence of $k_{0}\in\mathbb{N}$ such that the sequence
$\left\{ T_{\left(\varOmega_{k_{0}+k},\omega_{k_{0}+k}\right)}\right\} ^{\infty}_{k=0}$
is $\limsup$-admissible, we only need to require the existence of
an integer $M_{T}>0$, for each $T\in I$, such that $T\in\cup^{n+M_{T}-1}_{n=k}\left\{ T_{\left(\varOmega_{n},w_{n}\right)}\right\} $
for all $k\in\mathbb{N}$.
\end{rem}

\selectlanguage{english}%
\begin{lem}
[{\cite[Lemma 4.7]{GDSA_inconsist}}]\foreignlanguage{american}{\label{lem:relax_output}Let
$\left\{ \lambda_{k}\right\} ^{\infty}_{k=0}$ be a sequence of real
numbers such that $\lambda_{k}\in\left[\varepsilon,1+\rho_{\left\{ U_{i}\right\} ^{m}_{i=1}}-\varepsilon\right]$
for each $k\in\mathbb{N}$, where $\varepsilon>0$. Then the operator
$T_{\left(\varOmega_{k},\omega_{k}\right)\lambda_{k}}$ is a nonexpansive
operator for each $k\in\mathbb{N}$.}
\end{lem}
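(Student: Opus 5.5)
The plan is to show that each output operator $T_{\left(\varOmega_{k},\omega_{k}\right)}$ is \emph{averaged} with an averagedness constant at most $(1+\rho)^{-1}$, where $\rho:=\rho_{\left\{ U_{i}\right\} ^{m}_{i=1}}$. Any $\lambda_{k}$-relaxation with $\lambda_{k}\le1+\rho$ is then still a convex combination of $Id$ and a nonexpansive operator, hence nonexpansive. The only role of $\varepsilon$ is to keep $\lambda_{k}$ in $(0,1+\rho)$.

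\textbf{Step 1: a residual inequality for each $U_{i}$.} By Theorem \ref{FNE-equiv.cond}, each firmly nonexpansive $F_{i}$ with $U_{i}=(F_{i})_{\alpha_{i}}$ can be written as $F_{i}=2^{-1}(Id+N_{i})$ with $N_{i}$ nonexpansive. Hence
\[
U_{i}=\left(1-\tfrac{\alpha_{i}}{2}\right)Id+\tfrac{\alpha_{i}}{2}N_{i}.
\]
I will use the equivalent quantitative form of this: for all $x,y$,
\[
\left\Vert U_{i}x-U_{i}y\right\Vert ^{2}\le\left\Vert x-y\right\Vert ^{2}-\delta_{i}\left\Vert (Id-U_{i})x-(Id-U_{i})y\right\Vert ^{2},\qquad\delta_{i}:=(2-\alpha_{i})\alpha_{i}^{-1}\ge0.
\]
This follows from the standard identity for $\|(1-a)u+av\|^{2}$ together with $\|N_{i}x-N_{i}y\|\le\|x-y\|$. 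The case $\alpha_{i}=2$ gives $\delta_{i}=0$, which is just nonexpansiveness.

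\textbf{Step 2: the string operators.} Put $\delta:=\min_{i}\delta_{i}$ and fix a string $t\in\varOmega_{k}$ of length $q=q_{k}\le\mathcal{M}$. Let $x_{0}=x$, $x_{j}=U_{t(j)}x_{j-1}$, and similarly $y_{j}$. Summing the Step 1 inequalities along the string telescopes to
\[
\left\Vert x_{q}-y_{q}\right\Vert ^{2}\le\left\Vert x-y\right\Vert ^{2}-\delta\sum_{j=1}^{q}\left\Vert (x_{j-1}-x_{j})-(y_{j-1}-y_{j})\right\Vert ^{2}.
\]
The total residual $(Id-V_{k}[t])x-(Id-V_{k}[t])y$ is the sum of the $q$ step residuals. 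By Cauchy--Schwarz, its squared norm is at most $q$ times the sum above. Therefore $V_{k}[t]$ satisfies the Step 1 inequality with constant $\delta/q\ge\delta/\mathcal{M}\ge\rho$, using that $\rho\le\mathcal{M}^{-1}\delta$ by definition.

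Any operator $V$ satisfying this inequality with constant $\rho$ can be written as $V=(1-a)Id+aN$ with $N$ nonexpansive and $a=(1+\rho)^{-1}$. Indeed, one sets $N:=Id+a^{-1}(V-Id)$ and reverses the identity from Step 1.

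\textbf{Step 3: averaging and relaxation.} Since all strings share the same constant $a$, the weighted average is
\[
T_{\left(\varOmega_{k},\omega_{k}\right)}=\sum_{t}\omega_{k}(t)V_{k}[t]=(1-a)Id+a\sum_{t}\omega_{k}(t)N_{t}.
\]
The operator $\sum_{t}\omega_{k}(t)N_{t}$ is a convex combination of nonexpansive operators, so it is nonexpansive by Remark \ref{conv_com_comp}. Relaxing by $\lambda_{k}$ gives
\[
T_{\left(\varOmega_{k},\omega_{k}\right)\lambda_{k}}=(1-\lambda_{k}a)Id+\lambda_{k}a\,N.
\]
Here $0<\lambda_{k}a\le(1+\rho-\varepsilon)/(1+\rho)<1$, so this is a convex combination of $Id$ and a nonexpansive operator. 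Hence it is nonexpansive, again by Remark \ref{conv_com_comp}.

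\textbf{Main obstacle.} I expect the main care to be needed in Step 2: the equivalence between the residual inequality and the averaged form $(1-a)Id+aN$ with the exact constant $a=(1+\rho)^{-1}$, and the Cauchy--Schwarz bound that loses exactly the factor $q_{k}\le\mathcal{M}$. The degenerate case $\rho=0$ must also be checked, but it reduces to plain nonexpansiveness with $\lambda_{k}\le1$.
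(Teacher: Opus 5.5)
Your proof is correct and complete. The paper does not prove this lemma; it quotes it from \cite[Lemma 4.7]{GDSA_inconsist}, so there is no in-text argument to compare yours against. Your argument is the natural one behind the constant $\rho:=\rho_{\left\{ U_{i}\right\} ^{m}_{i=1}}$, and each step checks out:
\begin{enumerate}
\item The residual inequality for each $U_{i}$ holds with constant $(2-\alpha_{i})\alpha_{i}^{-1}$.
\item Telescoping along a string, followed by Cauchy--Schwarz, loses exactly the factor $q_{k}\le\mathcal{M}$. This reproduces the quantity $\mathcal{M}^{-1}\min_{i}(2-\alpha_{i})\alpha_{i}^{-1}$ in the definition of $\rho$.
\item The cap at $1$ in that definition is harmless, since you only use $\rho\le\mathcal{M}^{-1}\min_{i}(2-\alpha_{i})\alpha_{i}^{-1}$.
\item The reverse direction of your equivalence holds. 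The identity $\|(1-b)u+bw\|^{2}=(1-b)\|u\|^{2}+b\|w\|^{2}-b(1-b)\|u-w\|^{2}$ with $b=1+\rho\ge1$, combined with the residual inequality, gives $\|Nx-Ny\|\le\|x-y\|$.
\item Finally, $\lambda_{k}a\le1-\varepsilon(1+\rho)^{-1}<1$ closes the argument.
\end{enumerate}

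In the paper's own vocabulary, your Step 2 says that each string operator $V_{k}[t]$ is $2(1+\rho)^{-1}$-relaxed firmly nonexpansive. This property is preserved under convex combinations, because firmly nonexpansive operators form a convex set. Moreover, the $\lambda_{k}$-relaxation of a $\mu$-relaxed firmly nonexpansive operator is $\lambda_{k}\mu$-relaxed firmly nonexpansive. Hence $T_{\left(\varOmega_{k},\omega_{k}\right)\lambda_{k}}$ is $2\lambda_{k}(1+\rho)^{-1}$-relaxed firmly nonexpansive with parameter in $[0,2]$, and item (ii) of Theorem \ref{FNE-equiv.cond} gives nonexpansiveness directly. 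This is your Step 3 with the averagedness constants doubled.

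Two minor remarks. First, nonexpansiveness only needs $0\le\lambda_{k}\le1+\rho$; the margin $\varepsilon$ matters for the convergence results of Theorem \ref{thm_conv}, not here. Second, when all $\alpha_{i}<2$, the exact composition rule for averaged operators (additivity of $\kappa/(1-\kappa)$ along a string) gives the constant $\bigl(\sum_{j=1}^{q_{k}}\alpha_{t(j)}(2-\alpha_{t(j)})^{-1}\bigr)^{-1}$. This is in general larger than your $\delta/q_{k}$, but the lemma as stated needs only your cruder Cauchy--Schwarz bound.
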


\selectlanguage{american}%
We consider the properties of the following algorithm which is the
algorithmic framework for our unperturbed GDSA method.
\begin{lyxalgorithm}
[{\cite[Algorithm 4.5]{GDSA_inconsist}}]\label{alg:GDSA}

Given $\varepsilon\in\left(0,1\right]$, $x^{0}\in\mathcal{H}$ and
a sequence $\left\{ T_{\left(\varOmega_{k},\omega_{k}\right)}\right\} ^{\infty}_{k=0}$
of operators, the algorithm is defined by the recurrence
\[
x^{k+1}:=x^{k}+\lambda_{k}\left(T_{\left(\varOmega_{k},\omega_{k}\right)}\left(x^{k}\right)-x^{k}\right),
\]
where $\lambda_{k}\in\left[\varepsilon,1+\rho_{\left\{ U_{i}\right\} ^{m}_{i=1}}-\varepsilon\right]$
for each $k\in\mathbb{N}$.
\end{lyxalgorithm}

Let $\phi:\mathcal{H}\rightarrow\mathbb{R}$ be a convex and continuous
real valued objective function. We investigate the following algorithm
which provides a general framework for superiorization methods with
negative (sub)gradient perturbation. It is the superiorized version
of Algorithm \ref{alg:GDSA}.
\begin{lyxalgorithm}
[{\cite[Algorithm 5.1]{GDSA_inconsist}}]\label{supGDSA}Given $y^{0}\in\mathcal{H}$,
a sequence $\left\{ N_{k}\right\} ^{\infty}_{k=0}$ of positive integers,
a sequence $\left\{ \lambda_{k}\right\} ^{\infty}_{k=0}$ of positive
numbers and a family of positive real sequences $\left\{ \left\{ \beta_{k,n}\right\} ^{N_{k}}_{n=1}\right\} ^{\infty}_{k=0}$
such that $\sum^{\infty}_{k=0}\sum^{N_{k}}_{n=1}\beta_{k,n}<\infty$,
the algorithm is defined by the recurrences
\[
y^{k+1}:=y^{k}+\sum^{N_{k}}_{n=1}\beta_{k,n}v^{k,n}+\lambda_{k}\left(T_{\left(\varOmega_{k},\omega_{k}\right)}\left(y^{k}+\sum^{N_{k}}_{n=1}\beta_{k,n}v^{k,n}\right)-y^{k}-\sum^{N_{k}}_{n=1}\beta_{k,n}v^{k,n}\right)
\]
\[
\mathrm{wherein}
\]
\begin{equation}
v^{k,n+1}:=\begin{cases}
-\left\Vert s^{k,n}\right\Vert ^{-1}s^{k,n}, & \mathrm{if}\,\,0\not\in\partial\phi\left(y^{k}+\sum^{n}_{i=1}\beta_{k,i}v^{k,i}\right),\\
0, & \mathrm{if}\,\,0\in\partial\phi\left(y^{k}+\sum^{n}_{i=1}\beta_{k,i}v^{k,i}\right),
\end{cases}\label{eq:-6}
\end{equation}
for each $k\in\mathbb{N}$ and each $n=0,1,\dots,N_{k}-1$, where
$s^{k,n}$ is a selection of the subgradient $\partial\phi\left(y^{k}+\sum^{n}_{i=1}\beta_{k,i}v^{k,i}\right)$
(which exists by Theorem \ref{subdiff_ne+b}) for each $k\in\mathbb{N}$
and each $n=0,1,\dots,N_{k}-1$ (recalling that, by definition, $\sum^{0}_{i=1}\beta_{k,i}v^{k,i}:=0$).
\end{lyxalgorithm}

\selectlanguage{english}%
\begin{thm}
[{\cite[Theorem 4.8]{GDSA_inconsist}}]\foreignlanguage{american}{\label{thm_conv}Assume
that $C\not=\emptyset$ and that the sequence $\left\{ T_{\left(\varOmega_{k},\omega_{k}\right)}\right\} ^{\infty}_{k=0}$
is $\limsup$-admissible. Let $\left\{ \lambda_{k}\right\} ^{\infty}_{k=0}$
be a sequence of real numbers such that $\lambda_{k}\in\left[\varepsilon,1+\rho_{\left\{ U_{i}\right\} ^{m}_{i=1}}-\varepsilon\right]$
for each $k\in\mathbb{N}$, where $\varepsilon>0$, and let $x^{0}=y^{0}\in\mathcal{H}$.
Suppose that $\left\{ x^{k}\right\} ^{\infty}_{k=0}$ and $\left\{ y^{k}\right\} ^{\infty}_{k=0}$
are the sequences generated, respectively, by Algorithm \ref{alg:GDSA}
and Algorithm \ref{supGDSA} with respect to the sequence $\left\{ \lambda_{k}\right\} ^{\infty}_{k=0}$.
Then the following assertions hold:}
\selectlanguage{american}%
\begin{enumerate}
\item The sequence $\left\{ x^{k}\right\} ^{\infty}_{k=0}$ converges weakly
to a point $x\in C$.
\item If each $T\in I$ is approximately shrinking and the family $\left\{ C_{T}\right\} _{T\in I}$
is boundedly regular, then the convergence is (i) is strong.
\item The sequence$\left\{ x^{k}\right\} ^{\infty}_{k=0}$ is weakly (strongly,
if the convergence in (i) is strong) bounded perturbations resilient
with respect to $C$.
\end{enumerate}
\end{thm}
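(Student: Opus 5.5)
Theorem \ref{thm_conv} is quoted from \cite[Theorem 4.8]{GDSA_inconsist}, so the plan is to retrace the standard quasi-Fej\'er argument adapted to the $\limsup$-admissible control.

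\textbf{Step 1: $C\subseteq\mathrm{Fix}\,T_{(\varOmega_k,\omega_k)}$ for every $k$, and Fej\'er monotonicity.} By $\limsup$-admissibility, every $T_{(\varOmega_k,\omega_k)}$ belongs to $I$, so $C$ is contained in each $\mathrm{Fix}\,T_{(\varOmega_k,\omega_k)}$; thus $C\subseteq F$. By Lemma \ref{lem:relax_output}, each relaxed operator $T_{(\varOmega_k,\omega_k)\lambda_k}$ is nonexpansive. Hence for every $z\in C$ we have $\|x^{k+1}-z\|\le\|x^k-z\|$, and $\{x^k\}$ is bounded.

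\textbf{Step 2: asymptotic regularity via a strict inequality.} Here I will use the finer structure. Each $U_i$ is an $\alpha_i$-relaxation of a firmly nonexpansive operator, so it is strongly quasi-nonexpansive with constant $(2-\alpha_i)/\alpha_i$. Compositions of at most $\mathcal M$ such operators, followed by convex combinations, then give an estimate of the form
\[
\|T_{(\varOmega_k,\omega_k)}x-z\|^2\le\|x-z\|^2-\rho\,\|T_{(\varOmega_k,\omega_k)}x-x\|^2
\]
for $z\in F$, where $\rho$ comes from $\rho_{\{U_i\}^{m}_{i=1}}$. The relaxation with $\lambda_k\le 1+\rho-\varepsilon$ preserves a uniform constant $c(\varepsilon)>0$:
\[
\|x^{k+1}-z\|^2\le\|x^k-z\|^2-c\,\|T_{(\varOmega_k,\omega_k)}x^k-x^k\|^2 .
\]
This is the content behind Lemma \ref{lem:relax_output}. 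Summing over $k$ gives $\|T_{(\varOmega_k,\omega_k)}x^k-x^k\|\to0$, and therefore $\|x^{k+1}-x^k\|\to0$.

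\textbf{Step 3: identify the weak cluster points.} Fix $T\in I$. By admissibility, for each $k$ there is $n_k\in[k,k+M_T-1]$ with $T_{(\varOmega_{n_k},\omega_{n_k})}=T$. Then $\|Tx^{n_k}-x^{n_k}\|\to0$ and $\|x^{n_k}-x^k\|\to0$. Since $T$ is nonexpansive, $\|Tx^k-x^k\|\le 2\|x^k-x^{n_k}\|+\|Tx^{n_k}-x^{n_k}\|\to0$. By the demiclosedness principle, every weak cluster point lies in $\mathrm{Fix}\,T$ for every $T\in I$, that is, in $C$. Fej\'er monotonicity with respect to $C$, combined with Opial's lemma, then yields weak convergence to a point $x\in C$; this proves (i).

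\textbf{Strong convergence (ii).} If each $T\in I$ is approximately shrinking, then $d(x^k,C_T)\to0$ for each $T$. Bounded regularity then gives $d(x^k,C)\to0$, and a Fej\'er monotone sequence with this property converges strongly.

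\textbf{Resilience (iii).} This is the step I expect to be the main obstacle, because the property must hold for every $y^0\in\mathcal H$. Write $e_k:=\sum_{n=1}^{N_k}\beta_{k,n}v^{k,n}$; since $\|v^{k,n}\|\le1$, we have $\sum_k\|e_k\|<\infty$. Nonexpansiveness gives, for $z\in C$,
\[
\|y^{k+1}-z\|\le\|y^k-z\|+\|e_k\|,
\]
so $\{y^k\}$ is quasi-Fej\'er of type I with respect to $C$. Repeating Steps 2 and 3 with the summable errors absorbed then gives $\|Ty^k-y^k\|\to0$ for every $T\in I$. From there, the same demiclosedness argument together with the quasi-Fej\'er version of Opial's lemma gives weak convergence to a point of $C$. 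In the strong case, bounded regularity and approximate shrinking give $d(y^k,C)\to0$, and quasi-Fej\'er monotonicity upgrades this to strong convergence.
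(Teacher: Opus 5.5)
The paper gives no proof of Theorem~\ref{thm_conv}; it imports the result from \cite[Theorem 4.8]{GDSA_inconsist}, so your outline has to be judged on its own. The architecture is the standard one and is sound:
\begin{itemize}
\item Fej\'er monotonicity with respect to $C$, using that admissibility puts every $T_{(\varOmega_k,\omega_k)}$ in $I$.
\item Asymptotic regularity.
\item The window $[k,k+M_T-1]$ together with nonexpansiveness of each $T\in I$, giving $\|Tx^k-x^k\|\to0$.
\item Demiclosedness and Opial's lemma for (i), approximate shrinking plus bounded regularity for (ii), and quasi-Fej\'er versions of both for (iii).
\end{itemize}
Your (iii) proves convergence of the perturbed orbits directly, rather than deducing it from convergence of exact orbits. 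That is legitimate. In the strong case it is tied to the hypotheses of (ii), which is the situation in which (i) is asserted to converge strongly.

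The step that fails as written is Step 2. You derive the decrease inequality from strong quasi-nonexpansiveness of the $U_i$ and the composition and convex-combination rules for such operators. Those rules require a common fixed point of the factors, and they give the estimate only for $z\in\bigcap_i\mathrm{Fix}\,U_i$. In the inconsistent case, which is the whole point of this theorem, a point $z\in F$ is in general not fixed by any $U_i$ or any string operator $V_k[t]$. In Example~\ref{SimPro}, for instance, $F=\{u\mid u_1=0\}$ misses both $C_1$ and $C_2$. Moreover, $\mathrm{Fix}\,U_i$ may even be empty, since a translation is firmly nonexpansive.

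The inequality you want is nevertheless true, for a different reason. For all $x,y\in\mathcal H$, the two-point estimate
\[
\|U_ix-U_iy\|^2\le\|x-y\|^2-\tfrac{2-\alpha_i}{\alpha_i}\|(x-U_ix)-(y-U_iy)\|^2
\]
holds, i.e.\ $U_i$ is $\alpha_i/2$-averaged. This estimate needs no fixed-point hypothesis. It survives compositions of at most $\mathcal M$ factors with constant at least $\mathcal M^{-1}\min_i(2-\alpha_i)\alpha_i^{-1}$, and it survives convex combinations. Only at the end do you put $y=z\in\mathrm{Fix}\,T_{(\varOmega_k,\omega_k)}$.

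A quicker route reads the inequality off Lemma~\ref{lem:relax_output}, applied with $\varepsilon/2$:
\begin{itemize}
\item Set $\mu:=1+\rho_{\{U_i\}_{i=1}^m}-\varepsilon/2$. Then $N_k:=T_{(\varOmega_k,\omega_k)\mu}$ is nonexpansive and fixes every point of $C$.
\item Write $T_{(\varOmega_k,\omega_k)\lambda_k}=(1-\theta_k)Id+\theta_kN_k$ with $\theta_k:=\lambda_k/\mu$.
\item Apply the identity $\|(1-\theta)a+\theta b\|^2=(1-\theta)\|a\|^2+\theta\|b\|^2-\theta(1-\theta)\|a-b\|^2$ with $a=x^k-z$ and $b=N_kx^k-z$.
\end{itemize}
This gives, for every $z\in C$,
\[
\|x^{k+1}-z\|^2\le\|x^k-z\|^2-\lambda_k(\mu-\lambda_k)\,\|T_{(\varOmega_k,\omega_k)}x^k-x^k\|^2,\qquad \lambda_k(\mu-\lambda_k)\ge\varepsilon^2/2 .
\]
With this replacement the rest of your argument goes through. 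In the perturbed case of (iii), apply the same inequality at the point $y^k+e_k$.
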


\selectlanguage{american}%
\selectlanguage{english}%
\begin{rem}
\label{4.10}\foreignlanguage{american}{~}
\selectlanguage{american}%
\begin{enumerate}
\item In particular, if $U_{i}$ is a $2\left(\mathcal{M}+1\right)^{-1}$-relaxed
firmly nonexpansive, then $\rho_{\left\{ U_{i}\right\} ^{m}_{i=1}}=1$
and $\lambda_{k}\in\left[\varepsilon,2-\varepsilon\right]$ for each
$k\in\mathbb{N}$. If $U_{i}$ is a nonexpansive for each $i=1,2,\dots,m$,
then $\rho_{\left\{ U_{i}\right\} ^{m}_{i=1}}=0$ and $\lambda_{k}\in\left[\varepsilon,1-\varepsilon\right]$
for each $k\in\mathbb{N}$.
\item If the space $\mathcal{H}$ is of a finite dimension, then the convergence
in \textit{(i)} is strong.
\item Theorem \ref{thm_conv}\textit{(ii)} presents a somewhat analogous
result to Theorem 4.1 in \cite{MSA}, where the consistent case was
considered, that is input operators were assumed to have a common
fixed point. Theorem \ref{thm_conv}\textit{(ii)} applies here to
the inconsistent case, where the assumption of a common fixed point
of the input operators is replaced by the non-emptiness of the intersection
of fixed point sets of the output operators.
\end{enumerate}
\end{rem}

\selectlanguage{american}%
The following corollary is a direct consequence of Theorem \ref{thm_conv}.
\begin{cor}
\label{basic cor}Let $y^{0}\in\mathcal{H}$. Assume that the sequence
$\left\{ T_{\left(\varOmega_{k},\omega_{k}\right)}\right\} ^{\infty}_{k=0}$
is $\limsup$-admissible and $C\not=\emptyset$. Let $\left\{ \lambda_{k}\right\} ^{\infty}_{k=0}$
be a sequence of real numbers such that $\lambda_{k}\in\left[\varepsilon,1+\rho_{\left\{ U_{i}\right\} ^{m}_{i=1}}-\varepsilon\right]$
for each $k\in\mathbb{N}$, where $\varepsilon>0$. Then the sequence
$\left\{ y^{k}\right\} ^{\infty}_{k=0}$ generated by Algorithm \ref{supGDSA}
converges weakly to a point $y\in C$. If, in addition, each $T\in I$
is approximately shrinking and the family $\left\{ C_{T}\right\} _{T\in I}$
is boundedly regular, then $\left\{ y^{k}\right\} ^{\infty}_{k=0}$
converges strongly to a point $y\in C$.
\end{cor}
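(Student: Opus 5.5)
The natural approach is to read Algorithm \ref{supGDSA} as an instance of the perturbed recurrence in the definition of bounded perturbations resilience, and then to invoke Theorem \ref{thm_conv}(iii). For each $k\in\mathbb{N}$ set
\[
\beta_{k}:=\sum^{N_{k}}_{n=1}\beta_{k,n},\qquad v^{k}:=\beta_{k}^{-1}\sum^{N_{k}}_{n=1}\beta_{k,n}v^{k,n},
\]
which is well defined because each $\beta_{k,n}>0$. Define also the operators $S_{k}:=T_{\left(\varOmega_{k},\omega_{k}\right)\lambda_{k}}=\left(1-\lambda_{k}\right)Id+\lambda_{k}T_{\left(\varOmega_{k},\omega_{k}\right)}$. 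With this notation the recurrence of Algorithm \ref{supGDSA} reads $y^{k+1}=S_{k}\left(y^{k}+\beta_{k}v^{k}\right)$, while Algorithm \ref{alg:GDSA} reads $x^{k+1}=S_{k}\left(x^{k}\right)$.

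First I would check that the perturbations are bounded in the sense required by the definition. By \eqref{eq:-6}, each $v^{k,n}$ is either a unit vector or $0$, so $\Vert v^{k,n}\Vert\le1$. Since $v^{k}$ is a convex combination of the vectors $v^{k,n}$, it follows that $\Vert v^{k}\Vert\le1$, so $\{v^{k}\}^{\infty}_{k=0}$ is bounded. The hypothesis of Algorithm \ref{supGDSA} gives $\sum_{k}\beta_{k}=\sum_{k}\sum_{n}\beta_{k,n}<\infty$, and each $\beta_{k}$ is positive. The subgradient selections $s^{k,n}$ exist by Theorem \ref{subdiff_ne+b}, so the sequence $\{y^{k}\}^{\infty}_{k=0}$ is well defined. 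If the edge case $v^{k}=0$ seemed to cause any trouble in the definition, one could still keep $\beta_{k}>0$ with $v^{k}=0$, which is harmless.

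Next, under the stated hypotheses ($C\ne\emptyset$, $\limsup$-admissibility, and $\lambda_{k}\in[\varepsilon,1+\rho_{\{U_{i}\}^{m}_{i=1}}-\varepsilon]$), Theorem \ref{thm_conv}(i) says that Algorithm \ref{alg:GDSA} converges weakly to a point of $C$ for every initial point. Theorem \ref{thm_conv}(iii) then gives weak bounded perturbations resilience with respect to $C$. Applied to the perturbed sequence with initial point $y^{0}$, this yields weak convergence of $\{y^{k}\}^{\infty}_{k=0}$ to some $y\in C$. Under the additional assumptions that each $T\in I$ is approximately shrinking and that $\{C_{T}\}_{T\in I}$ is boundedly regular, Theorem \ref{thm_conv}(ii) upgrades the convergence to strong. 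The strong version of (iii) then gives strong convergence of $\{y^{k}\}^{\infty}_{k=0}$ to a point of $C$.

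The only point needing care is the second step: making sure that the inner loop of $N_{k}$ subgradient steps really collapses to a single bounded perturbation $\beta_{k}v^{k}$ with summable $\beta_{k}$. This holds because the normalization in \eqref{eq:-6} makes every $v^{k,n}$ have norm at most $1$. Everything else is a direct invocation of Theorem \ref{thm_conv}.
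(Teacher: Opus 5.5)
Your proposal is correct and follows the same route as the paper, which just declares the corollary a direct consequence of Theorem \ref{thm_conv} without giving a proof. Your argument supplies the details the paper leaves implicit: you aggregate the $N_{k}$ inner subgradient steps into a single bounded perturbation $\beta_{k}v^{k}$ with $\Vert v^{k}\Vert\le1$ and $\sum_{k}\beta_{k}<\infty$, and then invoke part (iii) in its weak form, or in its strong form once part (ii) applies.
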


In practice, the conditions of bounded regularity and limsup-admissibility
are often satisfied in common settings. For instance, bounded regularity
holds automatically in finite-dimensional Hilbert spaces. The limsup-admissibility
is typically satisfied for sequences of operators generated by periodic
or cyclic control schemes, or more generally when the selection of
operators avoids pathological accumulation of “inactive” operators
over iterations. These observations indicate that the assumptions
used in our convergence analysis, while abstract, are applicable in
a wide range of practical settings.

\section{Sufficient conditions for the guarantee question}\label{sec3}

The guarantee question in our case is to ascertain under which conditions
Algorithm \ref{supGDSA} performs ``better'' than Algorithm \ref{alg:GDSA},
in the sense that the value of the objection function $\phi$ at the
limit point of Algorithm \ref{supGDSA} is smaller than or equal to
the value of $\phi$ at the limit point of Algorithm \ref{alg:GDSA}.
Below we investigate conditions under which we can establish a mathematical
guarantee for this question. We begin by proving the following auxiliary
result, which is a modification of Lemma 5.3 in \cite{GDSA_inconsist}.
\begin{lem}
\label{aux_lem} For an arbitrary nonempty subset $C$ of $\mathcal{H}$,
a convex and continuous function $\phi:\mathcal{H}\rightarrow\mathbb{R}$
and $x,y\in\mathcal{H}$ such that $\phi\left(y\right)>\phi\left(x\right)$,
there exist real numbers $r_{1}>0$, $r_{2}>0$ and a neighborhood
$V$ of $x$ so that for each $\overline{y}\in B\left(y,r_{1}\right)$
and $v\in\partial\phi\left(\overline{y}\right)$, the following assertions
are satisfied:
\begin{enumerate}
\item $0\not\in\partial\phi\left(\overline{y}\right)$ and for each $\overline{z}\in B\left(x,r_{2}\right)$,
\begin{equation}
\left\langle \left\Vert v\right\Vert ^{-1}v,\overline{z}-\overline{y}\right\rangle <0.\label{eq:-7-1}
\end{equation}
\item For each $z\in V$, 
\begin{equation}
\left\langle \left\Vert v\right\Vert ^{-1}v,z-\overline{y}\right\rangle <-2^{-1}r_{2}.\label{eq:-16}
\end{equation}
\item Let $p$ be a non-negative integer. Assume that $\left\{ \alpha_{n}\right\} ^{p}_{n=1}$
is a finite sequence of positive real numbers such that $\sum^{p}_{n=1}a_{n}<2^{-1}r_{1}$
and $\left\{ v^{n}\right\} ^{p}_{n=1}\subset\mathcal{H}\backslash\left\{ 0\right\} $
is a sequence such that $v^{n}\in\partial\phi\left(\overline{y}-\sum^{n-1}_{i=1}\alpha_{i}\left\Vert v^{i}\right\Vert ^{-1}v^{i}\right)$
for each $n=1,2,\dots,p$. If, in addition, $\overline{y}\in B\left(y,2^{-1}r_{1}\right)$,
then
\begin{equation}
\left\Vert \overline{y}-\sum^{p}_{n=1}\alpha_{n}\left\Vert v^{n}\right\Vert ^{-1}v^{n}-z\right\Vert ^{2}\le\left\Vert \overline{y}-z\right\Vert ^{2}-\sum^{p}_{n=1}\left(r_{2}-\alpha_{n}\right)\alpha_{n}\label{eq:-25}
\end{equation}
for each $z\in V$ (by definition $\sum^{0}_{n=1}\alpha_{n}\left\Vert v^{n}\right\Vert ^{-1}v^{n}:=\sum^{0}_{n=1}\left(r_{2}-\alpha_{n}\right)\alpha_{n}:=0$).
\end{enumerate}
\end{lem}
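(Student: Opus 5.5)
Since $\phi$ is continuous and $\phi(y)>\phi(x)$, set $\delta:=2^{-1}(\phi(y)-\phi(x))>0$. I would first choose $r_{1}>0$ and $r_{2}>0$ with $\phi(\overline{y})>\phi(x)+\delta$ for all $\overline{y}\in B(y,r_{1})$ and $\phi(\overline{z})<\phi(x)+\delta$ for all $\overline{z}\in B(x,2r_{2})$. These two balls then lie in disjoint sublevel regions. In particular no $\overline{y}\in B(y,r_{1})$ minimizes $\phi$, so $0\notin\partial\phi(\overline{y})$.

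For assertion (i), I would take $v\in\partial\phi(\overline{y})$, which is nonzero, and apply the subgradient inequality (\ref{eq:-24}). For every $w\in B(x,2r_{2})$ it gives $\langle v,w-\overline{y}\rangle\le\phi(w)-\phi(\overline{y})<0$. Thus the whole ball $B(x,2r_{2})$ lies in the open half-space $\{w:\langle\|v\|^{-1}v,w-\overline{y}\rangle<0\}$, which yields (\ref{eq:-7-1}) on $B(x,r_{2})$.

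For (ii), I would use this half-space property quantitatively. Set $u:=\|v\|^{-1}v$ and fix $z\in B(x,r_{2})$. The point $w=z+\tfrac{3}{2}r_{2}u$ (more precisely $z+s u$ for any $s<r_{2}$, then letting $s\uparrow r_{2}$) lies in $B(x,2r_{2})$. Hence $\langle u,z-\overline{y}\rangle+s<0$, so $\langle u,z-\overline{y}\rangle\le -r_{2}<-2^{-1}r_{2}$. Taking $V:=B(x,r_{2})$ gives (\ref{eq:-16}) uniformly in $\overline{y}$ and $v$. The key point is that the margin $r_{2}$ comes from the radius gap between $B(x,r_{2})$ and $B(x,2r_{2})$, not from any bound on $\|v\|$.

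For (iii), I would argue by induction on $p$. Write $y^{(n)}:=\overline{y}-\sum_{i=1}^{n}\alpha_{i}\|v^{i}\|^{-1}v^{i}$. Since $\overline{y}\in B(y,2^{-1}r_{1})$ and $\sum\alpha_{n}<2^{-1}r_{1}$, every partial point satisfies $y^{(n-1)}\in B(y,r_{1})$, so (ii) applies at $y^{(n-1)}$ with $v^{n}\in\partial\phi(y^{(n-1)})$. Expanding the square gives
\[
\|y^{(n)}-z\|^{2}=\|y^{(n-1)}-z\|^{2}+2\alpha_{n}\langle u^{n},z-y^{(n-1)}\rangle+\alpha_{n}^{2}<\|y^{(n-1)}-z\|^{2}-\alpha_{n}r_{2}+\alpha_{n}^{2},
\]
where $u^{n}=\|v^{n}\|^{-1}v^{n}$. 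Summing these inequalities telescopes to (\ref{eq:-25}), and the case $p=0$ is trivial. The main bookkeeping obstacle is keeping every intermediate point inside $B(y,r_{1})$ so that (ii) remains applicable at each step. This is exactly what the halved radii $2^{-1}r_{1}$ in the hypotheses ensure.
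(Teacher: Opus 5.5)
Your proof is correct and follows essentially the paper's route: continuity gives separated balls, the subgradient inequality puts a ball around $x$ in an open half-space, shifting $z$ along $\|v\|^{-1}v$ yields the uniform margin, and the expanded squares telescope. The only differences are the scaling of the radii ($B(x,2r_{2})$ and $V=B(x,r_{2})$ versus the paper's $B(x,r_{2})$ and $V=B(x,2^{-1}r_{2})$) and one slip: the stray shift by $\tfrac{3}{2}r_{2}$ need not stay in $B(x,2r_{2})$, but the shift $z+r_{2}\|v\|^{-1}v$ (or your $s<r_{2}$) works directly because $\|z-x\|<r_{2}$.
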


\begin{proof}
Since $\phi\left(y\right)-\phi\left(x\right)>0$. By the continuity
of $\phi$, there exist $r_{1}>0$ and $r_{2}>0$ such that
\begin{equation}
\phi\left(\overline{y}\right)-\phi\left(\overline{z}\right)>0,\label{eq:-15}
\end{equation}
for each $\overline{y}\in B\left(y,r_{1}\right)$ and $\overline{z}\in B\left(x,r_{2}\right)$.
Let $\overline{y}\in B\left(y,r_{1}\right)$ and $v\in\partial\phi\left(\overline{y}\right)$.
Set $V:=B\left(x,2^{-1}r_{2}\right)$.

\textit{(i)} In view of (\ref{eq:-15}) and (\ref{eq:-24}), we have
for each $\overline{z}\in B\left(x,r_{2}\right)$,
\[
\left\langle v,\overline{z}-\overline{y}\right\rangle <0.
\]
It follows that $v\not=0$ and (\ref{eq:-7-1}) holds. Since $v$
is an arbitrary element of $\partial\phi\left(\overline{y}\right)$,
it follows that $0\not\in\partial\phi\left(\overline{y}\right)$.

\textit{(ii)} Let $z\in V$. Define $\overline{z}:=z+2^{-1}r_{2}\left\Vert v\right\Vert ^{-1}v$.
Then by the triangle inequality, $\overline{z}\in B\left(x,r_{2}\right)$
and by (\ref{eq:-7-1}),
\[
\left\langle \left\Vert v\right\Vert ^{-1}v,z+2^{-1}r_{2}\left\Vert v\right\Vert ^{-1}v-\overline{y}\right\rangle =\left\langle \left\Vert v\right\Vert ^{-1}v,\overline{z}-\overline{y}\right\rangle <0
\]
and (\ref{eq:-16}) follows.

\textit{(iii)} Assume that $\overline{y}\in B\left(y,2^{-1}r_{1}\right)$.
Since $\sum^{p-1}_{n=1}a_{n}<2^{-1}r_{1}$, we obtain

\begin{equation}
\left(\overline{y}-\sum^{p-1}_{n=1}\alpha_{n}\left\Vert v^{n}\right\Vert ^{-1}v^{n}\right)\in B\left(y,r_{1}\right).\label{eq:-27}
\end{equation}
It is true that
\begin{equation}
v^{p}\in\partial\phi\left(\overline{y}-\sum^{p-1}_{n=1}\alpha_{n}\left\Vert v^{n}\right\Vert ^{-1}v^{n}\right).\label{eq:-28}
\end{equation}
Let $z\in V$. The proof of (\ref{eq:-25}) is by induction on $p$.
Clearly, (\ref{eq:-25}) is true for $p=0$. Assume that $p>0$. Then
by the induction hypothesis, (\ref{eq:-27}), (\ref{eq:-28}) and
\textit{(ii)} above,
\begin{align*}
\left\Vert \overline{y}-\sum^{p}_{n=1}\alpha_{n}\left\Vert v^{n}\right\Vert ^{-1}v^{n}-z\right\Vert ^{2} & =\left\Vert \overline{y}-\sum^{p-1}_{n=1}\alpha_{n}\left\Vert v^{n}\right\Vert ^{-1}v^{n}-\alpha_{p}\left\Vert v^{p}\right\Vert ^{-1}v^{p}-z\right\Vert ^{2}\\
 & =\left\Vert \overline{y}-\sum^{p-1}_{n=1}\alpha_{n}\left\Vert v^{n}\right\Vert ^{-1}v^{n}-z\right\Vert ^{2}\\
 & +2\alpha_{p}\left\langle \left\Vert v^{p}\right\Vert ^{-1}v^{p},z-\left(\overline{y}-\sum^{p-1}_{n=1}\alpha_{n}\left\Vert v^{n}\right\Vert ^{-1}v^{n}\right)\right\rangle +\alpha^{2}_{p}\\
 & \le\left\Vert \overline{y}-z\right\Vert ^{2}-\sum^{p-1}_{n=1}\left(r_{2}-\alpha_{n}\right)\alpha_{n}-\alpha_{p}r_{2}+\alpha^{2}_{p}\\
 & =\left\Vert \overline{y}-z\right\Vert ^{2}-\sum^{p}_{n=1}\left(r_{2}-\alpha_{n}\right)\alpha_{n}.
\end{align*}
Lemma \ref{aux_lem} is now proved.
\end{proof}

The following key ``theorem of alternatives'' provides insights
into the behavior of the superiorized algorithm (Algorithm \ref{alg:GDSA}
above) in comparison with the feasibility-seeking one (Algorithm \ref{supGDSA}
above). This result is a new theorem and differs fundamentally from
the theorem of alternatives in \cite[Theorem 4.1]{CZ_sup} which establishes
Fej{\'e}r-monotonicity of the sequence generated by the superiorized
algorithm in the consistent case, under the assumption that the algorithm
does not outperform the unperturbed one and that only metric projections
are employed.

In contrast, Theorem \ref{alg_compar} below compares, under the above
assumption, the distances between the iterates of the sequences produced
by the superiorized algorithm with those of the unperturbed algorithm
in the inconsistent case, for more general operators $\left\{ T_{\left(\varOmega_{k},\omega_{k}\right)}\right\} ^{\infty}_{k=0}$
defined in the previous section. This comparison enables us to derive
sufficient conditions for addressing the guarantee question of the
superiorization method in the present setting.
\begin{thm}
\label{alg_compar}Suppose that $\phi:\mathcal{H}\rightarrow\mathbb{R}$
is a convex and continuous function and $\left\{ \lambda_{k}\right\} ^{\infty}_{k=0}\subset\left[\varepsilon,1+\rho_{\left\{ U_{i}\right\} ^{m}_{i=1}}-\varepsilon\right]$,
where $\varepsilon>0$, is a sequence of positive numbers. Let $x^{0}\in\mathcal{H}$
and $\left\{ x^{k}\right\} ^{\infty}_{k=0}$ be a sequence generated
by Algorithm \ref{alg:GDSA} with respect to the sequence $\left\{ \lambda_{k}\right\} ^{\infty}_{k=0}$,
converging strongly to a point $x\in C$. Assume that $y^{0}=x^{0}$
and that the sequence $\left\{ y^{k}\right\} ^{\infty}_{k=0}$, generated
by the corresponding Algorithm \ref{supGDSA}, converges strongly
to a point $y\in C$. Then exactly one of the following two alternatives
holds:
\begin{enumerate}
\item $\phi\left(y\right)\le\phi\left(x\right)$.

\suspend{enumerate} ~~or \resume{enumerate}
\item $\phi\left(y\right)>\phi\left(x\right)$ and there exists $k_{0}\in\mathbb{N}$
such that $\left\Vert y^{k+1}-x^{k+1}\right\Vert <\left\Vert y^{k}-x^{k}\right\Vert $
for all natural $k\ge k_{0}$.
\end{enumerate}
\end{thm}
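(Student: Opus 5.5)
The two alternatives are mutually exclusive, since one requires $\phi(y)\le\phi(x)$ and the other $\phi(y)>\phi(x)$. So it suffices to assume $\phi(y)>\phi(x)$ and produce $k_0$. I will apply Lemma \ref{aux_lem} to the pair $x,y$, with $\phi(y)>\phi(x)$. This gives radii $r_1,r_2>0$ and the neighborhood $V=B(x,2^{-1}r_2)$. The comparison of distances then splits into a perturbation step and a feasibility-seeking step: the perturbation strictly decreases the distance to points of $V$, and the relaxed string-averaging step does not increase distances.

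\textbf{Setting up the indices.} Since $y^k\to y$, $x^k\to x$ strongly and $\sum_k\sum_n\beta_{k,n}<\infty$, I choose $k_0$ such that for all $k\ge k_0$:
\begin{itemize}
\item $y^k\in B(y,2^{-1}r_1)$;
\item $x^k\in V$;
\item $\sum_{n=1}^{N_k}\beta_{k,n}<\min\{2^{-1}r_1,r_2\}$.
\end{itemize}
Fix $k\ge k_0$ and write $w^k:=y^k+\sum_{n=1}^{N_k}\beta_{k,n}v^{k,n}$.

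\textbf{The perturbation step.} I claim that every perturbation vector is nonzero, namely $v^{k,n}=-\|s^{k,n-1}\|^{-1}s^{k,n-1}$. Inductively, the partial sums $y^k+\sum_{i=1}^{n}\beta_{k,i}v^{k,i}$ stay in $B(y,r_1)$, because $\|v^{k,i}\|\le1$ and $\sum\beta<2^{-1}r_1$. Lemma \ref{aux_lem}(i) then gives $0\notin\partial\phi$ at each of these points, so the zero branch of (\ref{eq:-6}) never occurs. Hence $w^k=\overline{y}-\sum_n\alpha_n\|v^n\|^{-1}v^n$ with $\overline y=y^k$, $\alpha_n=\beta_{k,n}$ and $v^n=s^{k,n-1}$, which is exactly the form in Lemma \ref{aux_lem}(iii). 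Applying (\ref{eq:-25}) with $z=x^k\in V$ gives
\[
\|w^k-x^k\|^2\le\|y^k-x^k\|^2-\sum_{n=1}^{N_k}(r_2-\beta_{k,n})\beta_{k,n}<\|y^k-x^k\|^2 .
\]
The last inequality is strict because $N_k\ge1$, each $\beta_{k,n}>0$, and $\beta_{k,n}<r_2$.

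\textbf{The feasibility step.} By construction $y^{k+1}=S_k(w^k)$ and $x^{k+1}=S_k(x^k)$, where $S_k:=T_{(\varOmega_k,\omega_k)\lambda_k}$. Lemma \ref{lem:relax_output} says $S_k$ is nonexpansive, so
\[
\|y^{k+1}-x^{k+1}\|\le\|w^k-x^k\|<\|y^k-x^k\| .
\]
This holds for every $k\ge k_0$, which is alternative (ii).

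\textbf{Main obstacle.} The delicate point is verifying the hypotheses of Lemma \ref{aux_lem}(iii) uniformly in $k$. The subgradients are taken at the successive partial points, so one must check inductively that every intermediate point lies in $B(y,r_1)$; this is what makes the normalized directions well defined and nonzero, which the strict decrease depends on. The uniform choice of $k_0$ relies on the strong convergence of both sequences and on the tails of $\sum_k\sum_n\beta_{k,n}$ tending to zero.
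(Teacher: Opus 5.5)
Your proposal is correct and matches the paper's proof step for step. You apply Lemma \ref{aux_lem} to get $r_1,r_2,V$, choose $k_0$ by the same three conditions, set $\overline{y}=y^k$, $z=x^k$, $\alpha_n=\beta_{k,n}$, $v^n=s^{k,n-1}$ to invoke part (iii), and finish with nonexpansiveness of $T_{(\varOmega_k,\omega_k)\lambda_k}$ from Lemma \ref{lem:relax_output}. Your explicit remarks on mutual exclusivity of the alternatives and on strictness ($N_k\ge 1$, $0<\beta_{k,n}<r_2$) just spell out what the paper leaves implicit.
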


\begin{proof}
Assume that $\left\{ y^{k}\right\} ^{\infty}_{k=0}$ converges strongly
to a point $y$ such that $\phi\left(y\right)>\phi\left(x\right)$.
By Lemma \ref{aux_lem}, there exist real numbers $r_{1}>0$, $r_{2}>0$
and a neighborhood $V$ of $x$ such that each $\overline{y}\in B\left(y,r_{1}\right)$
and $v\in\partial\phi\left(\overline{y}\right)$ satisfy its assertions.
By using the strong convergence of $\left\{ y^{k}\right\} ^{\infty}_{k=0}$
to $y$, the strong convergence of $\left\{ x^{k}\right\} ^{\infty}_{k=0}$
to $x$ and the convergence of the series $\sum^{\infty}_{k=0}\sum^{N_{k}}_{n=1}\beta_{k,n}$,
choose $k_{0}\in\mathbb{N}$ such that 
\begin{equation}
y^{k}\in B\left(y,2^{-1}r_{1}\right)\,\,\mathrm{and}\,\,x^{k}\in V\label{eq:-30}
\end{equation}
 and 
\begin{equation}
\sum^{N_{k}}_{n=1}\beta_{k,n}<\min\left\{ 2^{-1}r_{1},r_{2}\right\} \label{eq:-14}
\end{equation}
for each integer $k\ge k_{0}$. This yields, for each $k\ge k_{0}$,
\[
y^{k}+\sum^{n-1}_{i=1}\beta_{k,i}v^{k,i}\in B\left(y,r_{1}\right)
\]
for each $n=1,2,\dots,N_{k}$, and consequently, by Lemma \ref{aux_lem}\textit{(i)},
\begin{equation}
0\not\in\partial\phi\left(y^{k}+\sum^{n-1}_{i=1}\beta_{k,i}v^{k,i}\right)\label{eq:-17}
\end{equation}
for each $n=1,2,\dots,N_{k}$. Let $k\ge k_{0}$ be an integer. By
(\ref{eq:-6}) and (\ref{eq:-17}), 
\begin{equation}
v^{k,n}=-\left\Vert s^{k,n-1}\right\Vert ^{-1}s^{k,n-1},\label{eq:-29}
\end{equation}
where 
\begin{equation}
s^{k,n-1}\in\partial\phi\left(y^{k}+\sum^{n-1}_{i=1}\beta_{k,i}v^{k,i}\right)\label{eq:-26}
\end{equation}
for each $n=1,2,\dots,N_{k}$. Set $p:=N_{k}$, $z:=x^{k}$ and $\overline{y}:=y^{k}$.
For each $n=1,2,\dots,p$, define $\alpha_{n}:=\beta_{k,n}>0$ and
$v^{n}:=s^{k,n-1}$. Then, by (\ref{eq:-30}), (\ref{eq:-14}), (\ref{eq:-17}),
(\ref{eq:-29}) and (\ref{eq:-26}), we have that $\overline{y}\in B\left(y,2^{-1}r_{1}\right)$,
$z\in V$, $\sum^{p}_{n=1}\alpha_{n}<2^{-1}r_{1}$, $\left\{ v^{n}\right\} ^{p}_{n=1}\subset\mathcal{H}\backslash\left\{ 0\right\} $
and $v^{n}\in\partial\phi\left(\overline{y}-\sum^{n-1}_{i=1}\alpha_{i}\left\Vert v^{i}\right\Vert ^{-1}v^{i}\right)$
for each $n=1,2,\dots,p$. Since the operator $T_{\left(\varOmega_{k},\omega_{k}\right)\lambda_{k}}$,
which is the $\lambda_{k}$-relaxation of $T_{\left(\varOmega_{k},\omega_{k}\right)}$,
is nonexpansive by Lemma \ref{lem:relax_output} and $x^{k+1}=T_{\left(\varOmega_{k},\omega_{k}\right)\lambda_{k}}\left(z\right)$,
we obtain from Lemma \ref{aux_lem}\textit{(iii)} that

\begin{align*}
\left\Vert y^{k+1}-x^{k+1}\right\Vert ^{2} & =\left\Vert T_{\left(\varOmega_{k},\omega_{k}\right)\lambda_{k}}\left(y^{k}+\sum^{N_{k}}_{n=1}\beta_{k,n}v^{k,n}\right)-T_{\left(\varOmega_{k},\omega_{k}\right)\lambda_{k}}\left(z\right)\right\Vert ^{2}\le\left\Vert y^{k}+\sum^{N_{k}}_{n=1}\beta_{k,n}v^{k,n}-z\right\Vert ^{2}\\
 & =\left\Vert \overline{y}-\sum^{p}_{n=1}\alpha_{n}\left\Vert v_{n}\right\Vert ^{-1}v_{n}-z\right\Vert ^{2}\le\left\Vert \overline{y}-z\right\Vert ^{2}-\sum^{p}_{n=1}\left(r_{2}-\alpha_{n}\right)\alpha_{n}\\
 & =\left\Vert y^{k}-x^{k}\right\Vert ^{2}-\sum^{N_{k}}_{n=1}\left(r_{2}-\beta_{k,n}\right)\beta_{k,n}.
\end{align*}
Since $\sum^{N_{k}}_{n=1}\beta_{k,n}<r_{2}$ by (\ref{eq:-14}), the
result follows and the proof of the theorem is complete.
\end{proof}

Based on negation of the alternative (ii) of this new theorem of alternatives,
we derive sufficient conditions in the next corollary.
\begin{cor}
\label{subseq_suff_cond}Let $\left\{ \lambda_{k}\right\} ^{\infty}_{k=0}\subset\left[\varepsilon,1+\rho_{\left\{ U_{i}\right\} ^{m}_{i=1}}-\varepsilon\right]$,
where $\varepsilon>0$, be a sequence of positive numbers. Let $x^{0}\in\mathcal{H}$
and $\left\{ x^{k}\right\} ^{\infty}_{k=0}$ be a sequence generated
by Algorithm \ref{alg:GDSA} with respect to the sequence $\left\{ \lambda_{k}\right\} ^{\infty}_{k=0}$
converging strongly to a point $x\in C$. Assume that $y^{0}=x^{0}$
and that the sequence $\left\{ y^{k}\right\} ^{\infty}_{k=0}$, generated
by the corresponding Algorithm \ref{supGDSA} converges strongly to
a point $y\in C$. Suppose that there exists a strictly increasing
sequence $\left\{ \gamma_{k}\right\} ^{\infty}_{k=0}$ of natural
numbers such that 
\begin{equation}
\left\Vert T_{\left(\varOmega_{\gamma_{k}},\omega_{\gamma_{k}}\right)\lambda_{\gamma_{k}}}\left(y^{\gamma_{k}}+\sum^{N_{\gamma_{k}}}_{n=1}\beta_{\gamma_{k},n}v^{\gamma_{k},n}\right)-T_{\left(\varOmega_{\gamma_{k}},\omega_{\gamma_{k}}\right)\lambda_{\gamma_{k}}}\left(x^{\gamma_{k}}\right)\right\Vert \ge\left\Vert y^{\gamma_{k}}-x^{\gamma_{k}}\right\Vert \label{eq:-3}
\end{equation}
for each $k\in\mathbb{N}$. Then $\phi\left(y\right)\le\phi\left(x\right)$.
\end{cor}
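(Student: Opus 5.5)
The plan is to argue by contradiction, using Theorem \ref{alg_compar}. Suppose $\phi(y)>\phi(x)$. The hypotheses of the corollary are exactly those of Theorem \ref{alg_compar}: Algorithm \ref{alg:GDSA} converges strongly to $x\in C$, Algorithm \ref{supGDSA} starts from $y^{0}=x^{0}$ and converges strongly to $y\in C$, and $\lambda_{k}$ lies in the admissible range. The first alternative fails by assumption. So the second alternative must hold, and there is $k_{0}\in\mathbb{N}$ with $\left\Vert y^{k+1}-x^{k+1}\right\Vert <\left\Vert y^{k}-x^{k}\right\Vert $ for every integer $k\ge k_{0}$.

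Next I would rewrite the left-hand side of (\ref{eq:-3}) in terms of the iterates. By the definitions of the algorithms,
\[
x^{k+1}=T_{\left(\varOmega_{k},\omega_{k}\right)\lambda_{k}}\left(x^{k}\right)
\]
for Algorithm \ref{alg:GDSA}, and
\[
y^{k+1}=T_{\left(\varOmega_{k},\omega_{k}\right)\lambda_{k}}\Bigl(y^{k}+\sum^{N_{k}}_{n=1}\beta_{k,n}v^{k,n}\Bigr)
\]
for Algorithm \ref{supGDSA}. Therefore (\ref{eq:-3}) says precisely that $\left\Vert y^{\gamma_{k}+1}-x^{\gamma_{k}+1}\right\Vert \ge\left\Vert y^{\gamma_{k}}-x^{\gamma_{k}}\right\Vert $ for every $k\in\mathbb{N}$.

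Since $\{\gamma_{k}\}$ is a strictly increasing sequence of natural numbers, $\gamma_{k}\to\infty$. Hence some $k$ satisfies $\gamma_{k}\ge k_{0}$. For that index the strict decrease from the second alternative, $\left\Vert y^{\gamma_{k}+1}-x^{\gamma_{k}+1}\right\Vert <\left\Vert y^{\gamma_{k}}-x^{\gamma_{k}}\right\Vert $, contradicts the inequality just derived from (\ref{eq:-3}). So $\phi(y)>\phi(x)$ is impossible, and $\phi(y)\le\phi(x)$.

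There is no real obstacle in this argument. The only point that needs care is checking that the relaxed operator $T_{\left(\varOmega_{k},\omega_{k}\right)\lambda_{k}}=(1-\lambda_{k})Id+\lambda_{k}T_{\left(\varOmega_{k},\omega_{k}\right)}$, applied to the perturbed point, gives exactly the update formula of Algorithm \ref{supGDSA}. This holds by direct expansion, so (\ref{eq:-3}) is genuinely the negation of strict decrease at the indices $\gamma_{k}$.
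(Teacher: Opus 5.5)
Your proposal is correct and follows the paper's proof. The paper likewise observes that $y^{\gamma_{k}+1}=T_{\left(\varOmega_{\gamma_{k}},\omega_{\gamma_{k}}\right)\lambda_{\gamma_{k}}}\bigl(y^{\gamma_{k}}+\sum^{N_{\gamma_{k}}}_{n=1}\beta_{\gamma_{k},n}v^{\gamma_{k},n}\bigr)$ and $x^{\gamma_{k}+1}=T_{\left(\varOmega_{\gamma_{k}},\omega_{\gamma_{k}}\right)\lambda_{\gamma_{k}}}(x^{\gamma_{k}})$, so that (\ref{eq:-3}) reads $\|y^{\gamma_{k}+1}-x^{\gamma_{k}+1}\|\ge\|y^{\gamma_{k}}-x^{\gamma_{k}}\|$, and then concludes from Theorem \ref{alg_compar}; you just make explicit the step it leaves implicit, namely that $\gamma_{k}\to\infty$ forces some $\gamma_{k}\ge k_{0}$, which rules out alternative (ii).
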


\begin{proof}
Clearly,
\[
y^{\gamma_{k}+1}=T_{\left(\varOmega_{\gamma_{k}},\omega_{\gamma_{k}}\right)\lambda_{\gamma_{k}}}\left(y^{\gamma_{k}}+\sum^{N_{\gamma_{k}}}_{n=1}\beta_{\gamma_{k},n}v^{\gamma_{k},n}\right).
\]
The result now follows from Theorem \ref{alg_compar}.
\end{proof}

The following corollary provides a stronger sufficient condition for
the superiorization guarantee question.
\begin{cor}
\label{inner_prod}Let $\phi:\mathcal{H}\rightarrow\mathbb{R}$ be
a convex and continuous function and $\left\{ \lambda_{k}\right\} ^{\infty}_{k=0}\subset\left[\varepsilon,1+\rho_{\left\{ U_{i}\right\} ^{m}_{i=1}}-\varepsilon\right]$,
where $\varepsilon>0$, be a sequence of positive numbers. Assume
that $x^{0}\in\mathcal{H}$ and $\left\{ x^{k}\right\} ^{\infty}_{k=0}$
is a sequence generated by Algorithm \ref{alg:GDSA} with respect
to the sequence $\left\{ \lambda_{k}\right\} ^{\infty}_{k=0}$ converging
strongly to a point $x\in C$. Assume that that the sequence $\left\{ y^{k}\right\} ^{\infty}_{k=0}$,
generated by the corresponding Algorithm \ref{supGDSA} converges
strongly to a point $y\in C.$ Suppose that there exists a strictly
increasing sequence $\left\{ \gamma_{k}\right\} ^{\infty}_{k=0}$
of natural numbers so that for each $k\in\mathbb{N}$, the parameters
$\left\{ \left\{ \beta_{k,n}\right\} ^{N_{k}}_{n=1}\right\} ^{\infty}_{k=0}$
are such that 
\begin{equation}
\begin{alignedat}{1}\begin{split}\left\langle y^{\gamma_{k}}-T_{\left(\varOmega_{\gamma_{k}},\omega_{\gamma_{k}}\right)\lambda_{\gamma_{k}}}\left(x^{\gamma_{k}}\right),y^{\gamma_{k}}-T_{\left(\varOmega_{\gamma_{k}},\omega_{\gamma_{k}}\right)\lambda_{\gamma_{k}}}\left(y^{\gamma_{k}}+\sum^{N_{\gamma_{k}}}_{n=1}\beta_{\gamma_{k},n}v^{\gamma_{k},n}\right)\right\rangle \\
\le\left\langle y^{\gamma_{k}}-x^{\gamma_{k}},x^{\gamma_{k}}-T_{\left(\varOmega_{\gamma_{k}},\omega_{\gamma_{k}}\right)\lambda_{\gamma_{k}}}\left(x^{\gamma_{k}}\right)\right\rangle .
\end{split}
\end{alignedat}
\label{eq:}
\end{equation}
Then $\phi\left(y\right)\le\phi\left(x\right)$.
\end{cor}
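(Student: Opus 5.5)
The plan is to reduce the claim to Corollary \ref{subseq_suff_cond}: I will show that the inner-product hypothesis (\ref{eq:}) at each index $\gamma_{k}$ implies the distance inequality (\ref{eq:-3}) at that same index. Corollary \ref{subseq_suff_cond} (equivalently, ruling out alternative (ii) of Theorem \ref{alg_compar}) then gives $\phi\left(y\right)\le\phi\left(x\right)$. The statement of Corollary \ref{inner_prod} does not repeat the hypothesis $y^{0}=x^{0}$, but it speaks of ``the corresponding Algorithm \ref{supGDSA}''. I will read it as in Corollary \ref{subseq_suff_cond} and Theorem \ref{alg_compar}, with the same starting point $y^{0}=x^{0}$, since this is what those results require.

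Fix $k$ and abbreviate $T:=T_{\left(\varOmega_{\gamma_{k}},\omega_{\gamma_{k}}\right)\lambda_{\gamma_{k}}}$. Set
\[
a:=y^{\gamma_{k}},\quad z:=x^{\gamma_{k}},\quad b:=T\left(z\right),\quad c:=T\left(y^{\gamma_{k}}+\sum^{N_{\gamma_{k}}}_{n=1}\beta_{\gamma_{k},n}v^{\gamma_{k},n}\right).
\]
In this notation (\ref{eq:}) reads $\left\langle a-b,a-c\right\rangle \le\left\langle a-z,z-b\right\rangle$, and (\ref{eq:-3}) reads $\left\Vert c-b\right\Vert \ge\left\Vert a-z\right\Vert$. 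The computation is purely algebraic and needs no property of $T$ at all.

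First write $c-b=\left(a-b\right)-\left(a-c\right)$ and expand:
\[
\left\Vert c-b\right\Vert ^{2}=\left\Vert a-b\right\Vert ^{2}-2\left\langle a-b,a-c\right\rangle +\left\Vert a-c\right\Vert ^{2}.
\]
Next write $a-b=\left(a-z\right)+\left(z-b\right)$ and expand:
\[
\left\Vert a-b\right\Vert ^{2}=\left\Vert a-z\right\Vert ^{2}+2\left\langle a-z,z-b\right\rangle +\left\Vert z-b\right\Vert ^{2}.
\]
Combining the two expansions gives
\[
\left\Vert c-b\right\Vert ^{2}=\left\Vert a-z\right\Vert ^{2}+2\left(\left\langle a-z,z-b\right\rangle -\left\langle a-b,a-c\right\rangle \right)+\left\Vert z-b\right\Vert ^{2}+\left\Vert a-c\right\Vert ^{2}.
\]
By (\ref{eq:}) the bracketed term is nonnegative, and the last two terms are squares, hence nonnegative. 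Therefore $\left\Vert c-b\right\Vert ^{2}\ge\left\Vert a-z\right\Vert ^{2}$, which is (\ref{eq:-3}) for this $k$.

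Since $k$ was arbitrary, (\ref{eq:-3}) holds along the whole strictly increasing sequence $\left\{ \gamma_{k}\right\} ^{\infty}_{k=0}$. All remaining hypotheses of Corollary \ref{subseq_suff_cond} are assumed directly: the range of $\lambda_{k}$, and the strong convergence of both sequences to $x\in C$ and $y\in C$. So that corollary yields $\phi\left(y\right)\le\phi\left(x\right)$.

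I expect no real obstacle. The only point needing care is choosing the decomposition so that exactly the two inner products appearing in (\ref{eq:}) show up, and then discarding the leftover squared norms $\left\Vert z-b\right\Vert ^{2}$ and $\left\Vert a-c\right\Vert ^{2}$. This also explains why the result is called a ``stronger'' sufficient condition: it implies the hypothesis of Corollary \ref{subseq_suff_cond} with slack.
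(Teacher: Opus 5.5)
Your argument is correct and is essentially the paper's proof. The paper also expands $\left\Vert y^{\gamma_{k}+1}-x^{\gamma_{k}+1}\right\Vert ^{2}$ through the intermediate points $y^{\gamma_{k}}$ and $x^{\gamma_{k}}$, uses the hypothesis to make the inner-product term nonnegative, and drops the squares $\left\Vert y^{\gamma_{k}+1}-y^{\gamma_{k}}\right\Vert ^{2}$ and $\left\Vert x^{\gamma_{k}}-x^{\gamma_{k}+1}\right\Vert ^{2}$ before invoking Corollary \ref{subseq_suff_cond}; your reading of the missing assumption $y^{0}=x^{0}$ is reasonable and in any case harmless, since the proof of Theorem \ref{alg_compar} never actually uses it.
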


\begin{proof}
By (\ref{eq:}), we see that 
\begin{equation}
\left\langle y^{\gamma_{k}}-x^{\gamma_{k}+1},y^{\gamma_{k}}-y^{\gamma_{k}+1}\right\rangle \le\left\langle y^{\gamma_{k}}-x^{\gamma_{k}},x^{\gamma_{k}}-x^{\gamma_{k}+1}\right\rangle \label{eq:-1}
\end{equation}
Now we use the properties of the inner product and (\ref{eq:-1})
to deduce 
\begin{align*}
\left\Vert y^{\gamma_{k}+1}-x^{\gamma_{k}+1}\right\Vert ^{2} & =\left\Vert y^{\gamma_{k}+1}-y^{\gamma_{k}}\right\Vert ^{2}+\left\Vert y^{\gamma_{k}}-x^{\gamma_{k}+1}\right\Vert ^{2}-2\left\langle y^{\gamma_{k}+1}-y^{\gamma_{k}},x^{\gamma_{k}+1}-y^{\gamma_{k}}\right\rangle \\
 & =\left\Vert y^{\gamma_{k}+1}-y^{\gamma_{k}}\right\Vert ^{2}+\left\Vert y^{\gamma_{k}}-x^{\gamma_{k}}\right\Vert ^{2}+\left\Vert x^{\gamma_{k}}-x^{\gamma_{k}+1}\right\Vert ^{2}\\
 & +2\left\langle y^{\gamma_{k}}-x^{\gamma_{k}},x^{\gamma_{k}}-x^{\gamma_{k}+1}\right\rangle -2\left\langle y^{\gamma_{k}}-x^{\gamma_{k}+1},y^{\gamma_{k}}-y^{\gamma_{k}+1}\right\rangle \\
 & \ge\left\Vert y^{\gamma_{k}}-x^{\gamma_{k}}\right\Vert ^{2}.
\end{align*}
 The result now follows from Corollary \ref{subseq_suff_cond}.
\end{proof}

Although the inequalities in Corollaries \ref{subseq_suff_cond} and
\ref{inner_prod} cannot, in general, be verified during the execution
of the algorithm, their structural meaning suggests practical indicators
that can be monitored or encouraged in implementations.

For example, one may require that each perturbation step increases
the distance between the iterations of the superiorized algorithm
and the feasibility-seeking one, or that each perturbation increases
the angular deviation between the update direction of the superiorized
algorithm and the update direction that the feasibility-seeking algorithm
would produce when starting from the same iterate. In addition, monitoring
feasibility measures along the superiorized iterates compared to the
unperturbed algorithm can serve as an informal check that the perturbations
guide the iterates toward a limit point with an objective function
value that is not larger than that of the limit point of the iterates
of the unperturbed algorithm.

These considerations are reflected in the parameter selection strategies
described in the sequel, which are designed to promote the qualitative
behavior prescribed by the theoretical conditions, even though the
conditions themselves are not explicitly verified.

The following examples illustrate concrete settings in which the abstract
assumptions in Corollaries \ref{subseq_suff_cond} and \ref{inner_prod}
can be checked explicitly.
\begin{example}
[Simultaneous method]\label{SimPro}Let $\varepsilon>0$ be a real
number. For all $k\in\mathbb{N}$, set $q_{k}:=1$, $\varOmega_{k}:=\left\{ 1,2,\dots,m\right\} {}^{\left\{ 1,2,\dots,q_{k}\right\} }$
and $\omega_{k}:=\overline{\omega}$ for a fixed $\overline{\omega}:\left\{ 1,2,\dots,m\right\} {}^{\left\{ 1,2,\dots,q_{k}\right\} }\rightarrow\left(0,1\right]$.
Clearly, for each $i\in\left\{ 1,2,\dots,m\right\} $, there is a
unique string $t^{i}\in\left\{ 1,2,\dots,m\right\} {}^{\left\{ 1,2,\dots,q_{k}\right\} }$
such that $t^{i}\left(1\right)=i$. Hence we can define the mapping
$\omega:\left\{ 1,2,\dots,m\right\} \rightarrow\left(0,1\right]$
by $\omega_{i}:=\overline{\omega}\left(t^{i}\right)$ for each $i\in\left\{ 1,2,\dots,m\right\} $.
In this case Algorithm \ref{alg:GDSA} with the above provisions provides
a fully-simultaneous method, that is, 
\begin{equation}
T_{\left(\varOmega_{k},\omega_{k}\right)}=\sum_{t\in\varOmega_{k}}\overline{\omega}\left(t\right)U_{t\left(1\right)}=\sum^{m}_{i=1}\overline{\omega}\left(t^{i}\right)U_{t^{i}\left(1\right)}=\sum^{m}_{i=1}\omega_{i}U_{i}\label{eq:-34}
\end{equation}
for each $k\in\mathbb{N}$ and $C=F=\mathrm{Fix}\sum^{m}_{i=1}\omega_{i}U_{i}$.
We see from (\ref{eq:-34}) that the sequence $\left\{ T_{\left(\varOmega_{k},\omega_{k}\right)}\right\} ^{\infty}_{k=0}$
is $\limsup$-admissible. Hence, under the assumption that $\mathrm{Fix}\sum^{m}_{i=1}\omega_{i}U_{i}\not=\emptyset$
we obtain, by Theorem \ref{thm_conv}, the weak convergence of this
fully-simultaneous method, with parameters $\left\{ \lambda_{k}\right\} ^{\infty}_{k=0}\subset\left[\varepsilon,1+\rho_{\left\{ U_{i}\right\} ^{m}_{i=1}}-\varepsilon\right]$,
to a point in $C$. In particular, when $U_{i}=P_{C_{i}}$, where
$C_{i}$ is a nonempty, closed and convex subset of $\mathcal{H}$
for each $i=1,2,\dots,m$, we obtain the well-known simultaneous projection
method, see, for example, \cite[Subsection 5.4]{C_book}. In this
case $\rho_{\left\{ U_{i}\right\} ^{m}_{i=1}}=1$ (by Example \ref{metric projection}
and Remark \ref{4.10}(a)), $I=\left\{ \sum^{m}_{i=1}\omega_{i}P_{C_{i}}\right\} $
and 
\begin{equation}
C=F=\mathrm{Fix}\sum^{m}_{i=1}\omega_{i}P_{C_{i}}=\underset{x\in\mathcal{H}}{\mathrm{Argmin}}f\left(x\right),\label{eq:-5}
\end{equation}
where $f:\mathcal{H}\rightarrow\mathbb{R}$ is a, so called, proximity
function defined by $f:=2^{-1}\sum^{m}_{i=1}\omega_{i}\left\Vert P_{C_{i}}-Id\right\Vert ^{2}$.
For the proof of the last equality in (\ref{eq:-5}), see, for instance,
Theorem 4.4.6 in \cite{C_book}. By Theorem \ref{thm_conv}\textit{(i)},
the simultaneous projection method with parameters $\left\{ \lambda_{k}\right\} ^{\infty}_{k=0}\subset\left[\varepsilon,2-\varepsilon\right]$
converges weakly to a point in $\underset{x\in\mathcal{H}}{\mathrm{Argmin}}f\left(x\right)$.
If, in addition, the operator $\sum^{m}_{i=1}\omega_{i}P_{C_{i}}$
is approximately shrinking, then (since the family $\left\{ C_{T}\right\} _{T\in I}=\left\{ C\right\} $
is boundedly regular), this convergence is strong by Theorem \ref{thm_conv}\textit{(ii)}.

Set $\mathcal{H}:=\mathbb{R}^{2}$ with the usual inner product and
$m:=2$. Let $C_{1}:=\left\{ u\in\mathbb{R}^{2}\,|\,u_{1}=1\right\} $
and $C_{2}:=\left\{ u\in\mathbb{R}^{2}\,|\,u_{1}=-1\right\} $ be
two parallel hyperplanes. Define $U_{i}:=P_{C_{i}}$ for each $i=1,2$,
and $\lambda_{k}:=1$ for each $k\in\mathbb{N}$. Assume that $x^{0}:=y^{0}:=\begin{pmatrix}0\\
0
\end{pmatrix}$ and let $\phi:\mathcal{H}\rightarrow\mathbb{R}$ be a convex function
defined by $\phi\left(u\right):=-u_{2}$ for each $u\in\mathcal{H}$.
Clearly, $C_{1}\cap C_{2}=\emptyset$ and the subgradient set of $\phi$
at each point is $\left\{ \begin{pmatrix}0\\
-1
\end{pmatrix}\right\} $. Let $\left\{ \left\{ \beta_{k,n}\right\} ^{N_{k}}_{n=1}\right\} ^{\infty}_{k=0}$
be an arbitrary sequence of positive numbers such that $\sum^{\infty}_{k=0}\sum^{N_{k}}_{n=1}\beta_{k,n}<\infty$.
Set $\overline{\omega}:\left\{ 1,2\right\} {}^{\left\{ 1\right\} }\rightarrow\left(0,1\right]$
by $\overline{\omega}\left(t\right):=2^{-1}$ for each string $t\in\left\{ 1,2\right\} {}^{\left\{ 1\right\} }$
and we obtain the function $\omega:\left\{ 1,2\right\} \rightarrow\left(0,1\right]$,
induced by $\overline{\omega}$, which attains a value of $2^{-1}$
for each $i\in\left\{ 1,2\right\} $. Then 
\[
T_{\left(\varOmega_{k},\omega_{k}\right)\lambda_{k}}=T_{\left(\varOmega_{k},\omega_{k}\right)}=\sum^{m}_{i=1}\omega_{i}P_{C_{i}}=2^{-1}P_{C_{1}}+2^{-1}P_{C_{2}}
\]
and
\[
C=F=\mathrm{Fix}\sum^{2}_{i=1}2^{-1}P_{C_{i}}=\underset{x\in\mathcal{H}}{\mathrm{Argmin}}f\left(x\right).
\]
In these settings the sequence $\left\{ x^{k}\right\} ^{\infty}_{k=0}$
generated by Algorithm \ref{alg:GDSA} satisfies $x^{k}=\begin{pmatrix}0\\
0
\end{pmatrix}$ for each $k\in\mathbb{N}$ and the sequence $\left\{ y^{k}\right\} ^{\infty}_{k=0}$
generated by Algorithm \ref{supGDSA} satisfies $y^{k}=\begin{pmatrix}0\\
\sum^{k-1}_{i=0}\sum^{N_{k-1}}_{n=1}\beta_{i,n}
\end{pmatrix}$ for each $k\in\mathbb{N}$ (recalling that by definition, $\sum^{k-1}_{i=0}\sum^{N_{k-1}}_{n=1}\beta_{i,n}=0$
for $k=0$). As a result

\begin{align*}
\left\Vert \sum^{2}_{i=1}2^{-1}P_{C_{i}}\left(y^{k}+\sum^{N_{k}}_{n=1}\beta_{k,n}v^{k,n}\right)-\sum^{2}_{i=1}2^{-1}P_{C_{i}}\left(x^{k}\right)\right\Vert  & =\left\Vert y^{k+1}-x^{k+1}\right\Vert =\sum^{k}_{i=0}\sum^{N_{k}}_{n=1}\beta_{i,n}\\
 & >\sum^{k-1}_{i=0}\sum^{N_{k-1}}_{n=1}\beta_{i,n}=\left\Vert y^{k}-x^{k}\right\Vert 
\end{align*}
and
\begin{align*}
\left\langle y^{k}-T_{\left(\varOmega_{k},\omega_{k}\right)\lambda_{k}}\left(x^{k}\right),y^{k}-T_{\left(\varOmega_{k},\omega_{k}\right)\lambda_{k}}\left(y^{k}+\sum^{N_{k}}_{n=1}\beta_{k,n}v^{k,n}\right)\right\rangle \\
=\left\langle y^{k}-x^{k+1},y^{k}-y^{k+1}\right\rangle \\
=\left\langle \begin{pmatrix}0\\
\sum^{k-1}_{i=0}\sum^{N_{k-1}}_{n=1}\beta_{i,n}
\end{pmatrix},\begin{pmatrix}0\\
-\sum^{N_{k}}_{n=1}\beta_{k,n}
\end{pmatrix}\right\rangle \\
-\sum^{k-1}_{i=0}\sum^{N_{k-1}}_{n=1}\beta_{i,n}\cdot\sum^{N_{k}}_{n=1}\beta_{k,n}<0\\
=\left\langle y^{k}-x^{k},x^{k}-T_{\left(\varOmega_{k},\omega_{k}\right)\lambda_{k}}\left(x^{k}\right)\right\rangle  & .
\end{align*}
for each $k\in\mathbb{N}$. By setting $\gamma_{k}:=k$ for each $k\in\mathbb{N}$,
we see that either of the inequalities which appear in Corollary \ref{subseq_suff_cond}
and Corollary \ref{inner_prod} are satisfied. Therefore, the superiorization
algorithm performs at least as well as the unperturbed one. Indeed,
denote $L:=\sum^{\infty}_{k=0}\sum^{N_{k}}_{n=1}\beta_{k,n}>0$. Then
$\lim_{k\rightarrow\infty}x^{k}=\begin{pmatrix}0\\
0
\end{pmatrix}$ and $\lim_{k\rightarrow\infty}y^{k}=\begin{pmatrix}0\\
L
\end{pmatrix}$, while 
\[
\phi\begin{pmatrix}0\\
L
\end{pmatrix}=-L<0=\phi\begin{pmatrix}0\\
0
\end{pmatrix}.
\]

More generally, in the above settings assume that $\phi$ is a convex
and continuously differentiable function such that $\frac{\partial\phi}{\partial u_{2}}\left(y^{0}\right)\not=0$.
By using the continuous differentiability of $\phi$, choose the perturbation
steps $\left\{ \beta_{k,n}\right\} ^{N_{k}}_{n=1}$ small enough so
that $\sum^{N_{k}}_{n=1}\beta_{k,n}<k^{-2}$ and 
\begin{equation}
\frac{\partial\phi}{\partial u_{2}}\left(y^{k}\right)\cdot\frac{\partial\phi}{\partial u_{2}}\left(\begin{pmatrix}0\\
y^{k}_{2}+\sum^{n}_{i=1}\beta_{k,i}v^{k,i}_{2}
\end{pmatrix}\right)>0\label{eq:-2}
\end{equation}
 for each $n=1,2,\dots,N_{k}$, where $v^{k,n}$ is defined recursively
by (\ref{eq:-6}), that is, 
\[
v^{k,i}:=-\nabla\phi\left(y^{k}+\sum^{i-1}_{j=1}\beta_{k,i}v^{k,j}\right)\left\Vert \nabla\phi\left(y^{k}+\sum^{i-1}_{j=1}\beta_{k,i}v^{k,j}\right)\right\Vert ^{-1}
\]
for each $i=1,2,\dot{\dots,n}$.

Since for each $k\in\mathbb{N}$, $y^{k}_{1}=0$, we see, in the light
of (\ref{eq:-2}), that for all $k\in\mathbb{N}$, either $y^{k}_{2}\ge0$
(if $\frac{\partial\phi}{\partial u_{2}}\left(y^{0}\right)<0$) or
$y^{k}_{2}\le0$ (if $\frac{\partial\phi}{\partial u_{2}}\left(y^{0}\right)>0$)
and

\[
\left\Vert y^{k+1}-x^{k+1}\right\Vert =\left\Vert y^{k+1}\right\Vert =\left|y^{k}_{2}+\sum^{N_{k}}_{i=1}\beta_{k,i}v^{k,i}_{2}\right|>\left|y^{k}_{2}\right|=\left\Vert y^{k}\right\Vert =\left\Vert y^{k}-x^{k}\right\Vert .
\]
and 
\[
\left\langle y^{k}-T_{\left(\varOmega_{k},\omega_{k}\right)\lambda_{k}}\left(x^{k}\right),y^{k}-T_{\left(\varOmega_{k},\omega_{k}\right)\lambda_{k}}\left(y^{k}+\sum^{N_{k}}_{n=1}\beta_{k,n}v^{k,n}\right)\right\rangle 
\]
\begin{align*}
=\left\langle y^{k}-x^{k+1},y^{k}-y^{k+1}\right\rangle =\left\langle \begin{pmatrix}0\\
y^{k}_{2}
\end{pmatrix},\begin{pmatrix}0\\
-\sum^{N_{k}}_{i=1}\beta_{k,i}v^{k,i}_{2}
\end{pmatrix}\right\rangle \le0\\
=\left\langle y^{k}-x^{k},x^{k}-T_{\left(\varOmega_{k},\omega_{k}\right)\lambda_{k}}\left(x^{k}\right)\right\rangle .
\end{align*}
Again by setting $\gamma_{k}:=k$ for each $k\in\mathbb{N}$, we see
that either of the inequalities which appear in Corollary \ref{subseq_suff_cond}
and Corollary \ref{inner_prod} are satisfied. Therefore, the superiorization
algorithm performs no worse than the unperturbed one. Observe that
the above discussion extends to any finite-dimensional Hilbert space
$\mathcal{H}$.
\end{example}

We next discuss how practitioners should test the step-sizes of the
perturbations in the SM to follow the sufficient conditions presented
here in quest of the superiorization guarantee problem. While Conditions
(\ref{eq:-3}) and (\ref{eq:}) provide a rigorous guarantee that
the superiorized algorithm performs at least as well as the unperturbed
algorithm, they are formulated in terms that are not always directly
computable in practice. Nevertheless, these conditions admit a clear
structural interpretation: they require that each perturbation step
remains compatible with the underlying convergence mechanism of the
feasibility-seeking algorithm and not compromise convergence with
respect to the objective function value at the limit point.

Motivated by this interpretation, we propose practical strategies
for selecting the perturbations that aim to emulate the behavior prescribed
by (\ref{eq:-3}) and (\ref{eq:}). Although these strategies do not
verify the assumptions explicitly, they are designed so that, on an
iteration-by-iteration basis, the superiorized algorithm follows the
corresponding theoretical model as closely as possible.

The role of the theoretical conditions (\ref{eq:-3}) and (\ref{eq:})
is, therefore, not merely to certify correctness in special cases,
but rather to provide insight into how perturbations should be chosen
so as to remain aligned with the convergence behavior of the unperturbed
algorithm while promoting improved objective function values. The
practical strategies proposed here are steered by this insight.

Assume that $\left\{ \delta_{k}\right\} ^{\infty}_{k=0}$ and $\left\{ \tau_{k}\right\} ^{\infty}_{k=0}$
are sequences of positive numbers such that $\sum^{\infty}_{k=1}\delta_{k}<\infty$
and $\tau_{k}<\delta_{k}$ for each $k\in\mathbb{N}$. Choose $x^{0}:=y^{0}$
as arbitrary starting points. For each $k\in\mathbb{N}$ such that
$k\,\mathrm{mod}\,2=0$, let $\beta_{k}$ be a selection of the set
\[
\underset{\beta\in\left[\tau_{k},\delta_{k}\right]^{N_{k}}\,|\,\sum^{N_{k}}_{n=1}\beta_{n}\le\delta_{k}}{\mathrm{Argmin}}\left\Vert T_{\left(\varOmega_{k},\omega_{k}\right)\lambda_{k}}\left(y^{k}+\sum^{N_{k}}_{n=1}\beta^{n}v^{k,n}\right)-T_{\left(\varOmega_{k},\omega_{k}\right)\lambda_{k}}\left(x^{k}\right)\right\Vert ,
\]
and for each $k\in\mathbb{N}$ such that $k\,\mathrm{mod}\,2=1$,
let $\beta_{k}$ be a selection of the set
\[
\underset{\beta\in\left[\tau_{k},\delta_{k}\right]^{N_{k}}\,|\,\sum^{N_{k}}_{n=1}\beta_{n}\le\delta_{k}}{\mathrm{Argmax}}\left\Vert T_{\left(\varOmega_{k},\omega_{k}\right)\lambda_{k}}\left(y^{k}+\sum^{N_{k}}_{n=1}\beta^{n}v^{k,n}\right)-T_{\left(\varOmega_{k},\omega_{k}\right)\lambda_{k}}\left(x^{k}\right)\right\Vert .
\]
Let $\left\{ x^{k}\right\} ^{\infty}_{k=0}$ be a sequence generated
by Algorithm \ref{alg:GDSA} and let $\left\{ y^{k}\right\} ^{\infty}_{k=0}$
be a sequence generated by Algorithm \ref{supGDSA}, where $\beta_{k,n}:=\beta^{n}_{k}$
for each $k\in\mathbb{N}$ and each $n=1,\dots,N_{k}$. In this way,
choosing minimal possible values of $\left\Vert y^{k}-x^{k}\right\Vert $
in odd iterations and maximal possible values of $\left\Vert y^{k}-x^{k}\right\Vert $
in even ones, we follow the theoretical model described in Corollary
\ref{subseq_suff_cond}.

Alternatively, in the above settings for each $k\in\mathbb{N}$, we
can choose $\beta_{k}$ to be a selection of the set
\[
\underset{\beta\in\left[\tau_{k},\delta_{k}\right]^{N_{k}}\,|\,\sum^{N_{k}}_{n=1}\beta_{n}\le\delta_{k}}{\mathrm{Argmin}}\left\langle y^{k}-T_{\left(\varOmega_{k},\omega_{k}\right)\lambda_{k}}\left(x^{k}\right),y^{k}-T_{\left(\varOmega_{k},\omega_{k}\right)\lambda_{k}}\left(y^{k}+\sum^{N_{k}}_{n=1}\beta^{n}v^{k,n}\right)\right\rangle 
\]
and generate sequences $\left\{ x^{k}\right\} ^{\infty}_{k=0}$ and
$\left\{ y^{k}\right\} ^{\infty}_{k=0}$ by, respectively, Algorithm
\ref{alg:GDSA} and Algorithm \ref{supGDSA}, where in Algorithm \ref{supGDSA}
$\beta_{k,n}:=\beta^{n}_{k}$ for each $k\in\mathbb{N}$ and each
$n=0,1,\dots,N_{k}$. In this way we minimize the left side of (\ref{eq:})
and thus follow the theoretical model described in Corollary \ref{inner_prod}. 

\selectlanguage{english}%
\bigskip{}
\foreignlanguage{american}{}
\selectlanguage{american}%
\textbf{Acknowledgments}. We thank Walaa Moursi and Henry Wolkowicz
from the Department of Combinatorics and Optimization at the University
of Waterloo for some fruitful inspiring discussions in the early stages
of this investigation. We gratefully acknowledge the enlightening
and constructive comments of the two anonymous referees which helped
us improve the paper.

\selectlanguage{english}%
\bigskip{}

\textbf{Author Contribution declaration.} Both authors Kay Barshad
and Yair Censor worked on the research, on developing the ideas and
both wrote the manuscript and reviewed it.

\bigskip{}

\textbf{Funding. }The work of Kay Barshad and Yair Censor is supported
by the Cooperation Program in Cancer Research of the German Cancer
Research Center (DKFZ) and Israel's Ministry of Innovation, Science
and Technology (MOST). The work of Yair Censor was supported also
by the U.S. National Institutes of Health Grant Number R01CA266467.

\bigskip{}

\textbf{Data Availability.} \foreignlanguage{american}{No data was
used for the research described in the article.}\bigskip{}

\textbf{Compliance with Ethical Standards. }The authors have no potential
conflicts of interest to declare that are relevant to the content
of this article.

\bigskip{}

\textbf{Competing Interests. }The authors have no competing interests
to declare that are relevant to the content of this article. 

\bigskip{}

\bibliographystyle{j_style}
\bibliography{bank_of_references}
\selectlanguage{american}%

\end{document}